\tikzstyle{vertex}=[circle, draw, inner sep=2pt, minimum size=6pt]
\providecommand{\keywords}[1]{
  \small	
  \textbf{\textit{Keywords---}} #1
}
\newtheorem{lemma}{Lemma}[section]
\newtheorem{corollary}{Corollary}[section]
\newtheorem{proposition}{Proposition}[section]
\newtheorem{assumption}{Assumption}[section]
\newtheorem{example}{Example}[section]
\newcommand{\nc}{\newcommand}
\nc{\cA}{{\cal A}}
\nc{\cB}{{\cal B}}
\nc{\cC}{{\cal C}}
\nc{\cD}{{\cal D}}
\nc{\cE}{{\cal E}}
\nc{\cG}{{\cal G}}
\nc{\cF}{{\cal F}}
\nc{\cH}{{\cal H}}
\nc{\cI}{{\cal I}}
\nc{\cK}{{\cal K}}
\nc{\cL}{{\cal L}}
\nc{\cM}{{\cal M}}
\nc{\cN}{{\cal N}}
\nc{\cO}{{\cal O}}
\nc{\cP}{{\cal P}}
\nc{\cQ}{{\cal Q}}
\nc{\cR}{{\cal R}}
\nc{\cS}{{\cal S}}
\nc{\cT}{{\cal T}}
\nc{\tx}{{\tilde x}}
\nc{\la}{{\langle}}
\nc{\ra}{{\rangle}}
\nc{\ts}{\textsuperscript}
\def\R{\mathbb{R}}
\nc{\bea}{\begin{eqnarray}}
\nc{\eea}{\end{eqnarray}}
\nc{\bean}{\begin{eqnarray*}}
\nc{\eean}{\end{eqnarray*}}
\nc{\be}{\begin{equation}}
\nc{\ee}{\end{equation}}
\nc{\ben}{\begin{equation*}}
\nc{\een}{\end{equation*}}
\nc{\ba}{\begin{array}}
\nc{\ea}{\end{array}}
\title{An Alternative Perspective on Copositive and Convex Relaxations of Nonconvex Quadratic Programs}
\author{E. Alper Y{\i}ld{\i}r{\i}m\thanks{School of Mathematics, Peter Guthrie Tait Road, The University of Edinburgh, Edinburgh, EH9 3FD, United Kingdom. ORCID ID: 0000-0003-4141-3189 E-mail: \tt{E.A.Yildirim@ed.ac.uk}}}
\date{29 May 2020}
\begin{document}

\maketitle

\begin{abstract}
We study convex relaxations of nonconvex quadratic programs. We identify a family of so-called feasibility preserving convex relaxations, which includes the well-known copositive and doubly nonnegative relaxations, with the property that the convex relaxation is feasible if and only if the nonconvex quadratic program is feasible. We observe that each convex relaxation in this family implicitly induces a convex underestimator of the objective function on the feasible region of the quadratic program. This alternative perspective on convex relaxations enables us to establish several useful properties of the corresponding convex underestimators. In particular, if the recession cone of the feasible region of the quadratic program does not contain any directions of negative curvature, we show that the convex underestimator arising from the copositive relaxation is precisely the convex envelope of the objective function of the quadratic program, providing another proof of Burer's well-known result on the exactness of the copositive relaxation. We also present an algorithmic recipe for constructing instances of quadratic programs with a finite optimal value but an unbounded doubly nonnegative relaxation. 
\end{abstract}

\keywords{Nonconvex quadratic programs, copositive relaxation, doubly nonnegative relaxation, convex relaxation, convex envelope}

{\bf AMS Subject Classification:} 90C20, 90C25, 90C26

\section{Introduction} \label{intro}

In this paper, we are interested in nonconvex quadratic programs that can be represented as follows:
\[
\begin{array}{lrrcl}
(QP) & \ell^* = \min & q(x) &  & \\
 & \textrm{s.t.} & Ax & = & b, \\
 & & x & \geq & 0,
\end{array}
\]
where $q: \R^n \to \R$ is given by
\begin{equation} \label{def_qx}
q(x) = x^T Q x + 2 c^T x.
\end{equation}
The parameters are given by $Q \in \R^{n \times n}$, which is a symmetric matrix, $c \in \R^n$, $A \in \R^{m \times n}$, and $b \in \R^m$, and $x \in \R^n$ is the decision variable. We denote the optimal value of (QP) by $\ell^* \in \R \cup \{-\infty\} \cup \{+\infty\}$, with the usual conventions that $\ell^* = -\infty$ if (QP) is unbounded below, and $\ell^* = +\infty$ if (QP) is infeasible. We denote the feasible region of (QP) by $S$, i.e.,
\begin{equation} \label{def_S}
S = \left\{x \in \R^n: Ax = b, \quad x \geq 0\right\}. 
\end{equation}

Apart from being interesting in their own right, nonconvex quadratic programs also arise as subproblems in sequential quadratic programming algorithms and augmented Lagrangian methods for general nonlinear programming problems (see, e.g.,~\cite{NoceWrig06}).

In this paper, we identify a family of convex relaxations of (QP) that includes the well-known copositive and doubly nonnegative relaxations, with the property that each convex relaxation in this family is feasible if and only if (QP) is feasible. We therefore refer to this family of convex relaxations as {\em feasibility preserving} relaxations. We present an alternative perspective of feasibility preserving convex relaxations of (QP), based on the observation that each such relaxation implicitly gives rise to a convex underestimator of the objective function $q(\cdot)$ on $S$. Our contributions are as follows:

\begin{enumerate}
    \item For each feasibility preserving convex relaxation of (QP), we identify a simple sufficient condition that ensures the exactness of the relaxation and another sufficient condition that results in a trivial convex underestimator.

    \item Under the assumption that $S$ is nonempty and bounded, we show that the convex underestimator arising from the copositive relaxation is in fact the convex envelope of $q(\cdot)$ on $S$. In addition, we show that this conclusion holds even if $S$ is unbounded, under the additional assumption that the recession cone of $S$ does not contain any direction of negative curvature.
    
    \item Under the assumption that $S$ is nonempty and bounded, we show that the convex underestimator arising from the doubly nonnegative relaxation agrees with the objective function value at any local minimizer of (QP).
    
    \item For any $n \geq 5$, we present an algorithmic recipe to construct an instance of (QP) such that $\ell^*$ is finite but the lower bound arising from the doubly nonnegative relaxation equals $-\infty$.
\end{enumerate}

We next briefly review the related literature, with a focus on more recent studies. Semidefinite relaxations of (QP) arising from the "lifting" idea proposed in~\cite{shor87} have been studied extensively in the literature since the early 1990s. More recently, Burer~\cite{Burer09} established that a class of nonconvex quadratic programs, which includes (QP) as a special case, admits an exact copositive relaxation, which unified, extended, and subsumed a number of similar results previously established for more specific classes of optimization problems. Burer's result yields an explicit convex conic representation of a nonconvex optimization problem, where the difficulty is now shifted to the copositive cone that does not admit a polynomial-time membership oracle (see, e.g.,~\cite{ref:MurtKab}). Nevertheless, this unifying result led to intensive research activity along two main directions. On the one hand, Burer's result has been extended to larger classes of nonconvex optimization problems by introducing generalized notions of the copositive cone (see, e.g.,~\cite{burer2012copositive,BurerD12,EichfelderP13,DickinsonEP13,ArimaKK13,PenaVZ15,BaiMP16,ArimaKK16,BomzeCDL17}, and also~\cite{kim2020} for a recent geometric view of copositive reformulations). On the other hand, various tractable inner and outer approximations of the copositive cone have been proposed (see, e.g., \cite{parrilo2000structured,de2002approximation,pena2007computing,bundfuss2009adaptive,alper2012accuracy,lasserre2014new,gouveia2019inner}). In particular, by duality, Burer's result implies that any tractable inner approximation of the copositive cone immediately gives rise to a convex relaxation of (QP). 

Our perspective in this paper allows us to establish several properties of a class of convex relaxations in a unified manner and allows us to pinpoint the relation between a particular property and the relevant underlying structure of the relaxation. We identify several new properties and extend some of the earlier results in the literature. As a byproduct, we obtain an alternative proof of Burer's well-known copositive reformulation result for (QP). Finally, we identify a key property that enables us to construct an instance of (QP) with a finite optimal value and an unbounded doubly nonnegative relaxation.

This paper is organized as follows. We define our notation in Section~\ref{notation}. We review several results about nonconvex quadratic programs and introduce various convex cones in Section~\ref{prelim}. Feasibility preserving convex relaxations and the corresponding convex underestimators are introduced in Section~\ref{feas_pres}. Section~\ref{exact_trivial} presents simple conditions that lead to exact and trivial relaxations. In Section~\ref{bou_vs_unbou}, we study properties of feasibility preserving convex relaxations for quadratic programs with bounded and unbounded feasible regions. Finally, we conclude the paper in Section~\ref{conc}.

\subsection{Notation} \label{notation}

We use $\R^n, \R^n_+$, $\R^{m \times n}$, and $\cS^n$ to denote the $n$-dimensional Euclidean space, the nonnegative orthant, the set of $m \times n$ real matrices, and the space of $n \times n$ real symmetric matrices, respectively. The vector of all ones and the identity matrix are denoted by $e$ and $I$, respectively, whose dimensions will always be clear from the context. We use 0 to denote the real number 0, the vector of all zeroes, as well as the matrix of all zeroes. The convex hull of a set is denoted by $\textrm{conv}(\cdot)$. We reserve uppercase calligraphic letters to denote the subsets of $\cS^n$. For an index set $\mathbf{A} \subseteq \{1,\ldots,n\}$, we denote by $|\mathbf{A}|$ the cardinality of $\mathbf{A}$. For $x \in \R^n$, $Q \in \cS^n$, $\mathbf{A} \subseteq \{1,\ldots,n\}$, and $\mathbf{B} \subseteq \{1,\ldots,n\}$, we denote by $x_\mathbf{A} \in \R^{|\mathbf{A}|}$ the subvector of $x$ restricted to the indices in $\mathbf{A}$ and by $Q_{\mathbf{A}\mathbf{B}}$ the submatrix of $Q$ whose rows and columns are indexed by $\mathbf{A}$ and $\mathbf{B}$, respectively. Therefore, $Q_{\mathbf{A}\mathbf{A}}$ denotes a principal submatrix of $Q$. We simply use $x_j$ and $Q_{ij}$ for singleton index sets. We also adopt Matlab-like notation. We use $1:n$ to denote the index set $\{1,\ldots,n\}$. For $x \in \R^n$ and $y \in \R^m$, we denote by $[x;y] \in \R^{n+m}$ the column vector obtained by stacking $x$ and $y$. For matrices in $\cS^{n+1}$, rows and columns are indexed using $\{0,1,\ldots,n\}$. We use superscripts to denote different elements in a set of vectors or matrices. For any $U \in \R^{m \times n}$ and $V \in \R^{m \times n}$, the trace inner product is denoted by
\[
\langle U, V \rangle := \sum\limits_{i=1}^m \sum\limits_{j = 1}^n U_{ij} V_{ij}.
\]
For any $x \in \R^n$ and $Q \in \cS^n$, note that $x^T Q x = \langle Q, x x^T \rangle$. For $\tilde{x} \in \R^n_+$, we define the following index sets:
\begin{eqnarray} 
\mathbf{P}(\tilde{x}) & = & \left\{j \in \{1,\ldots,n\}: \tilde{x}_j > 0\right\}, \label{def_Px} \\
\mathbf{Z}(\tilde{x}) & = & \left\{j \in \{1,\ldots,n\}: \tilde{x}_j = 0\right\}. \label{def_Zx}
\end{eqnarray}

\section{Preliminaries} \label{prelim}

In this section, we review several results that will be useful for the subsequent exposition.

Consider an instance of (QP). In addition to $S$ given by \eqref{def_S}, which denotes the feasible region of (QP), we define the following sets:
\begin{eqnarray}
L & = & \{d \in \R^n: Ad = 0, \quad d \geq 0\} \label{def_L}\\
L_0 & = & \{d \in \R^n: Ad = 0, \quad d \geq 0, \quad d^T Q d = 0\} \label{def_L0}\\
L_\infty & = & \{d \in \R^n: Ad = 0, \quad d \geq 0, \quad d^T Q d < 0\} \label{def_Linf}
\end{eqnarray}

Note that $L$ denotes the recession cone of $S$. $L_0$ and $L_\infty$ are subsets of $L$ that consist of recession directions of zero and negative curvature, respectively.

If $S = \emptyset$, then (QP) is infeasible and we define $\ell^* = +\infty$. Otherwise, if (QP) is unbounded below, we define $\ell^* = -\infty$. The following well-known result, which completely characterizes unbounded instances of (QP), reveals that (QP) is unbounded below if and only if $q(\cdot)$ is unbounded below on a ray of $S$.

\begin{lemma}\cite[Theorem 3]{Eaves71} \label{unb_qp}
(QP) is unbounded below if and only if $S \neq \emptyset$ and at least one of the following two conditions holds:
\begin{enumerate}
    \item $L_\infty \neq \emptyset$.
    \item There exist ${\tilde d} \in L_0$ and ${\tilde x} \in S$ such that $(Q {\tilde x} + c)^T {\tilde d} < 0$.
\end{enumerate}
\end{lemma}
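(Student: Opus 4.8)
The plan is to treat the two implications separately, after the following reformulation: together with $S \neq \emptyset$, conditions~1 and~2 assert precisely that $q(\cdot)$ is unbounded below along some ray of $S$. Indeed, for $\tilde x \in S$ and $\tilde d \in L$ we have $\tilde x + \alpha \tilde d \in S$ for all $\alpha \geq 0$ (since $A(\tilde x + \alpha \tilde d) = b$ and $\tilde x + \alpha \tilde d \geq 0$), and
\[
q(\tilde x + \alpha \tilde d) = q(\tilde x) + 2 \alpha (Q \tilde x + c)^T \tilde d + \alpha^2\, \tilde d^T Q \tilde d ,
\]
which tends to $-\infty$ precisely when $\tilde d^T Q \tilde d < 0$ (so $\tilde d \in L_\infty$, i.e., condition~1), or $\tilde d^T Q \tilde d = 0$ and $(Q \tilde x + c)^T \tilde d < 0$ (so $\tilde d \in L_0$, i.e., condition~2 with this $\tilde x$). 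This settles the ``if'' direction at once: if $S \neq \emptyset$ and condition~1 holds, the displayed quadratic in $\alpha$ is unbounded below for \emph{any} $\tilde x \in S$ and any $\tilde d \in L_\infty$; if condition~2 holds, it is unbounded below for the prescribed pair; either way $\ell^* = -\infty$.

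For the ``only if'' direction, suppose $\ell^* = -\infty$; then $S \neq \emptyset$, and we must exhibit condition~1 or~2. I would argue by contradiction, assuming that \emph{neither} holds, i.e., that
\[
\textrm{(i)}\ \ d^T Q d \geq 0 \ \textrm{ for all }\ d \in L, \qquad \textrm{(ii)}\ \ (Q x + c)^T d \geq 0 \ \textrm{ for all }\ x \in S,\ d \in L_0 ,
\]
and proving that $q(\cdot)$ is bounded below on $S$, contradicting $\ell^* = -\infty$. I would do this by induction on $\dim S$, the case $\dim S = 0$ being immediate. For the inductive step, fix a minimizing sequence $x^k \in S$ with $q(x^k) \to -\infty$ and use the Minkowski--Weyl decomposition $S = P + L$, where $P = \textrm{conv}\{v^1,\ldots,v^p\}$ is the compact hull of the vertices of $S$ (which exist since $S \subseteq \R^n_+$ contains no line); write $x^k = v^k + e^k$ with $v^k \in P$, $e^k \in L$. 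Compactness of $P$ and continuity of $q(\cdot)$ force $\|e^k\| \to \infty$; passing to a subsequence, $v^k \to v^* \in S$ and $\bar e^k := e^k/\|e^k\| \to \bar e$, a unit vector in $L$. In
\[
q(x^k) = q(v^k) + 2\,\|e^k\|\,(Q v^k + c)^T \bar e^k + \|e^k\|^2\,(\bar e^k)^T Q \bar e^k
\]
the last term is nonnegative by~(i), so if $\bar e^T Q \bar e > 0$ it dominates and yields $q(x^k) \to +\infty$, a contradiction; hence $\bar e \in L_0$. Since $q(v^k)$ stays bounded and the quadratic term is $\geq 0$, the linear term must tend to $-\infty$, forcing $(Q v^* + c)^T \bar e \leq 0$; but~(ii), applied with $v^* \in S$ and $\bar e \in L_0$, gives $(Q v^* + c)^T \bar e \geq 0$, so $(Q v^* + c)^T \bar e = 0$.

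One is therefore pushed into the degenerate configuration $\bar e \in L_0$, $(Q v^* + c)^T \bar e = 0$, in which $q(\cdot)$ is constant along the ray $\{v^* + \alpha \bar e : \alpha \geq 0\}$; resolving it is the technical core. The idea: from~(i) and $\bar e \in L_0$ one gets $(Q \bar e)^T d \geq 0$ for every $d \in L$ (let $t \downarrow 0$ in $(\bar e + t d)^T Q(\bar e + t d)\geq 0$), so the affine function $x \mapsto (Q x + c)^T \bar e$ is bounded below on $S$; by~(ii) it is also nonnegative on $S$ and, vanishing at $v^*$, it attains its minimum $0$ on a nonempty face $G$ of $S$. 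If $G$ meets $\textrm{relint}(S)$ — equivalently $G = S$ — then $(Q x + c)^T \bar e \equiv 0$ on $S$, so $q(x + \alpha \bar e) = q(x)$ for all $x \in S$, $\alpha \geq 0$; translating each $x^k$ by $-\tau_k \bar e$, with $\tau_k = \min\{x^k_i/\bar e_i : \bar e_i > 0\}$, keeps it in $S$ and preserves $q$-values, and (after passing to a subsequence) lands it on a fixed proper face $S \cap \{x_j = 0\}$, whose recession cone is contained in $L$ and which inherits~(i)--(ii); the inductive hypothesis then bounds $q(\cdot)$ below on that face, contradicting $q(x^k) \to -\infty$. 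The one delicate point — the main obstacle — is guaranteeing that $x \mapsto (Q x + c)^T \bar e$ actually vanishes somewhere in $\textrm{relint}(S)$; this calls for a sharper choice of the minimizing sequence, or for a preliminary reduction to the case in which $q(\cdot)$ is bounded below on every proper face of $S$ (whence a minimizing sequence for the truncated problems $\min\{q(x) : x \in S,\ \|x - x^0\| \leq R\}$, $x^0 \in \textrm{relint}(S)$, eventually lies in $\textrm{relint}(S)$), and it is essentially the substance of the argument of Eaves cited in the excerpt.
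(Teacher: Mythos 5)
The paper offers no proof of this lemma; it is imported wholesale as Theorem~3 of Eaves' 1971 paper, so there is no internal argument to compare yours against. Judging your attempt on its own terms: the ``if'' direction is complete and correct, and reading conditions 1--2 as ``$q(\cdot)$ is unbounded below along some ray of $S$'' is exactly the right way to organize the statement.

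The ``only if'' direction is left genuinely incomplete --- you say so yourself --- but the obstacle you stop at is not actually there. You worry that the translation $x^k \mapsto y^k := x^k - \tau_k \bar e$ is only legitimate when $(Qx+c)^T\bar e \equiv 0$ on $S$, because you want it to \emph{preserve} $q$-values; this is what drags you into the analysis of the face $G$ and its relation to the relative interior of $S$. You do not need preservation, only that the translated sequence remain unbounded below, i.e.\ $q(y^k) \le q(x^k)$. Since $\bar e \in L_0$, you have $q(y^k) = q(x^k) - 2\tau_k\,(Qx^k+c)^T\bar e + \tau_k^2\,\bar e^T Q \bar e = q(x^k) - 2\tau_k\,(Qx^k+c)^T\bar e$, and your hypothesis (ii), applied at $x^k \in S$ with $\bar e \in L_0$, gives $(Qx^k+c)^T\bar e \ge 0$, while $\tau_k \ge 0$ by construction; hence $q(y^k) \le q(x^k) \to -\infty$. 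Each $y^k$ lies in $S$ and has $y^k_j = 0$ for some index $j$ with $\bar e_j > 0$; as there are finitely many indices, a subsequence lands on a fixed face $F = S \cap \{x : x_j = 0\}$, which is proper (hence of strictly smaller dimension) because $v^* + \bar e \in S$ has positive $j$-th coordinate. The recession cone of $F$ is contained in $L$, so $F$ inherits (i) and (ii), and your inductive hypothesis bounds $q(\cdot)$ below on $F$, contradicting $q(y^k)\to-\infty$. With this one observation the entire discussion of where $(Qx+c)^T\bar e$ vanishes can be deleted and the induction closes. As submitted, however, the argument stops short of this, so the ``only if'' implication is not established.
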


If $S$ is nonempty and bounded, then $-\infty < \ell^* < +\infty$ and there exists $x^* \in S$ such that $q(x^*) = \ell^*$. If $S$ is an unbounded polyhedron and $q(\cdot)$ is bounded below on $S$, then the well-known result of Frank and Wolfe~\cite{FW56} implies that the optimal value is attained, i.e., there exists $x^* \in S$ such that $q(x^*) = \ell^*$. Therefore, in each of these two cases, we denote the set of optimal solutions of (QP) by
\begin{equation} \label{def_S_star}
S^* = \left\{x^* \in S: q(x^*) = \ell^* \right\}.    
\end{equation}

We next recall two definitions (see, e.g.,~\cite{Roc70}). A function $g: \R^n \to \R$ is said to be a {\em convex underestimator} of $q(\cdot)$ on $S$ if 
\begin{equation} \label{def_conv_undest}
\textrm{$g(\cdot)$ is convex and} \quad g(x) \leq q(x), \quad x \in S.
\end{equation}
The pointwise supremum of all convex underestimators of $q(\cdot)$ on $S$, denoted by $f_S(x)$, is called the convex (lower) envelope of $q(\cdot)$ on $S$:
\begin{equation} \label{def_conv_env}
f_S(x) = \sup \left\{g(x): g(x)~\textrm{is a convex underestimator of $q(\cdot)$ on $S$} \right\}, \quad x \in S.     
\end{equation}
Note that
\begin{equation} \label{conv_env_prop}
\ell^* = \min\limits_{x \in S} q(x) = \min\limits_{x \in S} f_S(x), \quad \textrm{and} \quad \textrm{conv}(S^*) \subseteq \left\{x \in S: f_S(x) = \ell^*\right\}.
\end{equation}
The first relation in \eqref{conv_env_prop} simply follows by combining $f_S(\tilde{x}) \leq q(\tilde{x})$ for each $\tilde{x} \in S$ with the observation that $g(x) = \ell^*$ is a convex function such that $g(x) \leq q(x)$ for each $x \in S$, which implies that $f_S(\tilde{x}) \geq \ell^*$ for each $\tilde{x} \in S$ by \eqref{def_conv_env}. The second relation in \eqref{conv_env_prop} follows from the first one and the convexity of $f_S(\cdot)$.

\subsection{Local Minimizers}

In this section, we review a characterization of the set of local minimizers of (QP). 

Consider an instance of (QP). Since $S$ is a polyhedral set, the constraint qualification holds at every feasible point. Therefore, if $\tilde{x}$ is a local minimizer of (QP), then there exist $\tilde{y} \in \R^m$ and $\tilde{s} \in \R^n$ such that the following KKT conditions are satisfied:
\begin{eqnarray} 
Q \tilde{x} + c - A^T \tilde{y} - \tilde{s} & = & 0, \label{kkt1}\\
A \tilde{x} & = & b, \label{kkt2}\\
\tilde{x}_j \tilde{s}_j & = & 0, \quad j = 1,\ldots,n, \label{kkt3}\\
\tilde{x} & \in & \R^n_+, \label{kkt4}\\
\tilde{s} & \in & \R^n_+. \label{kkt5}
\end{eqnarray}
We remark that $\tilde{y}$ and $\tilde{s}$ are actually the Lagrange multipliers scaled by $1/2$. 

In addition, any local minimizer $\tilde{x}$ satisfies the following second order necessary conditions:
\begin{equation} \label{2nd_ord_con}
d^T Q d \geq 0, \quad d \in C_{\tilde{x}},    
\end{equation}
where
\begin{equation} \label{def_Dx}
C_{\tilde{x}} = \left\{d \in \R^n: Ad = 0, \quad d^T (Q \tilde{x} + c) = 0, \quad d_j \geq 0, \quad j \in \mathbf{Z}(\tilde{x})\right\}. 
\end{equation}

Conversely, any $\tilde{x} \in \R^n$ that satisfies the KKT conditions \eqref{kkt1}--\eqref{kkt5} and the second order necessary conditions \eqref{2nd_ord_con} is, in fact, a local minimizer of (QP) (see, e.g.,~\cite{ref:majthay1971,ref:jiaquan1982}). Therefore, the conditions \eqref{kkt1}--\eqref{2nd_ord_con} provide a complete characterization of the set of all local minimizers of (QP).

\subsection{Convex Cones of Interest}

Let us define the following cones in $\cS^{n}$:
\begin{eqnarray}
{\cal N}^{n} & = & \left\{Y \in \cS^{n}: Y_{ij} \geq 0, ~ 1 \leq i \leq j \leq n\right\}, \label{def_N}\\
{\cal PSD}^{n} & = & \left\{Y \in \cS^{n}: Y = \sum_{j=1}^k y^j (y^j)^T, \quad y^j \in \R^n, ~ j = 1,\ldots,k \right\}, \label{def_PSD}\\
{\cal D}^{n} & = & \left\{Y \in \cS^{n}: Y \in {\cal PSD}^{n}, \quad Y \in \cN^{n} \right\}, \label{def_D}\\
{\cal CP}^{n} & = & \left\{Y \in \cS^{n+1}: Y = \sum_{j=1}^k y^j (y^j)^T, \quad y^j \in \R^n_+, ~ j = 1,\ldots,k \right\}, \label{def_C}\\
{\cal COP}^n & = & \left\{Y \in \cS^n: u^T Y u \geq 0, \quad \forall u \in \R^n_+ \right\}. \label{def_COP}\\
\end{eqnarray}

Note that ${\cal N}^{n}$ is the cone of componentwise nonnegative matrices, ${\cal PSD}^{n}$ is the cone of positive semidefinite matrices, ${\cal D}^n$ is the cone of doubly nonnegative matrices, ${\cal CP}^n$ is the cone of completely positive matrices, and ${\cal COP}^n$ is the cone of copositive matrices. Each of these cones is a proper convex cone. Furthermore, it is easy to verify that 
\begin{equation} \label{inc_rels}
{\cal CP}^{n} \subseteq {\cal D}^{n} \subseteq {\cal PSD}^{n} \subseteq {\cal PSD}^{n} + {\cal N}^n \subseteq {\cal COP}^n.
\end{equation}
Furthermore, by~\cite{ref:Diananda},
\begin{equation} \label{diananda}
{\cal CP}^n = {\cal DN}^n, \quad \textrm{and} \quad {\cal PSD}^{n} + {\cal N}^n = {\cal COP}^n \quad \textrm{if and only if} \quad n \leq 4.
\end{equation}

In addition, we define the following closed convex cone in ${\cal S}^{n+1}$:
\begin{equation} \label{def_P_cone}
{\cal P}^{n+1} = \left\{Y \in \cS^{n+1}: Y \in {\cal PSD}^{n+1}, \quad Y_{0,1:n} \in \R^n_+\right\},    
\end{equation}
i.e., ${\cal P}^{n+1}$ is the cone of positive semidefinite matrices in ${\cal S}^{n+1}$ with an additional nonnegativity constraint on the $0$th row and $0$th column. It is easy to verify that
\begin{equation} \label{inc_rels_2}
{\cal D}^{n+1} \subseteq {\cal P}^{n+1} \subseteq {\cal PSD}^{n+1}.    
\end{equation}

We finally define the following family of convex cones in ${\cal S}^{n+1}$:
\begin{equation} \label{def_cone_fam}
\mathbb{K} = \left\{{\cal K} \in \cS^{n+1}: {\cal K}~\textrm{is a closed convex cone s.t.}~{\cal CP}^{n+1} \subseteq {\cal K} \subseteq {\cal P}^{n+1}\right\}.
\end{equation}

\section{Feasibility Preserving Convex Relaxations} \label{feas_pres}

In this section, we introduce a family of convex relaxations of (QP) and establish several properties of such relaxations.

Consider the following family of problems:
\[
\begin{array}{lrrcl}
(P({\cal K})) & \ell_{\cal K} = \min & \langle \widehat{Q}, Y \rangle & & \\
& \textrm{s.t.}
 & \langle \widehat{A}, Y \rangle & = & 0\\
 & & Y_{00} & = & 1\\
 & & Y & \in & {\cal K},
\end{array}
\]
where 
\begin{eqnarray}
\widehat{Q} & = & \left[ \begin{matrix} 0 & c^T \\ c & Q \end{matrix} \right], \label{def_Qh}\\
\widehat{A} & = & \left[ \begin{matrix} b^T b & -b^T A \\ -A^T b & A^T A \end{matrix} \right] = \left[ \begin{matrix} b^T \\ -A^T \end{matrix}\right] \left[ \begin{matrix} b^T \\ -A^T  \end{matrix}\right]^T, \label{def_Ah}\\
\end{eqnarray}
and ${\cal K} \in \mathbb{K}$. The optimal value is denoted by $\ell_{\cal K} \in \R \cup \{-\infty\} \cup \{+\infty\}$, with the usual aforementioned conventions.

For any ${\cal K} \in \mathbb{K}$, it is easy to verify that $(P({\cal K}))$ is a convex relaxation of (QP) since $(P({\cal K}))$ is a convex optimization problem and ${\tilde Y} = [1;{\tilde x}][1;{\tilde x}]^T$ is a feasible solution of $(P({\cal K}))$ for any $\tilde{x} \in S$ with the same objective function value $q({\tilde x})$. Therefore, we immediately obtain
\begin{equation} \label{low_bo}
\ell_{\cal K} \leq \ell^*.    
\end{equation}
Furthermore, 
\begin{equation} \label{nested_cones}
{\cal K}_1 \in \mathbb{K}, \quad {\cal K}_2 \in \mathbb{K}, \quad {\cal K}_1 \subseteq {\cal K}_2 \quad \Longrightarrow \quad \ell_{{\cal K}_2} \leq \ell_{{\cal K}_1} \leq \ell^*.
\end{equation}
We will henceforth refer to the convex relaxation $(P({\cal K}))$ as the copositive relaxation if ${\cal K} = {\cal CP}^{n+1}$ and doubly nonnegative relaxation if ${\cal K} = {\cal D}^{n+1}$.

The next result establishes a useful property of $(P({\cal K}))$ for any ${\cal K} \in \mathbb{K}$ and forms the basis of our alternative perspective.

\begin{lemma} \label{lem1}
Let ${\cal K} \in \mathbb{K}$, where $\mathbb{K}$ is given by \eqref{def_cone_fam}. Then, 
\begin{equation} \label{feas_Y_hat}
{\tilde Y} = \left[ \begin{matrix} 1 & {\tilde x}^T \\ {\tilde x} & {\tilde X} \end{matrix} \right] \in \cS^{n+1}
\end{equation}
is a feasible solution of $(P({\cal K}))$ if and only if 
\begin{enumerate}
    \item ${\tilde x} \in S$, and
    \item there exist $\tilde{d}^j \in \R^n,~j = 1,\ldots,k$ such that $\tilde{X} = \tilde{x} \tilde{x}^T + \sum\limits_{j = 1}^k \tilde{d}^j (\tilde{d}^j)^T$, where $A \tilde{d}^j = 0$ for each $j = 1,\ldots,k$.
\end{enumerate}
\end{lemma}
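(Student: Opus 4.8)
The plan is to analyze the two linear constraints $\langle \widehat{A}, Y\rangle = 0$ and $Y_{00} = 1$ together with the membership $Y \in \mathcal{K}$, and to translate each of them into a statement about the blocks of $\tilde Y$. Write $\tilde Y$ in block form as in \eqref{feas_Y_hat}. The constraint $Y_{00} = 1$ is exactly $\tilde Y_{00} = 1$. For the constraint involving $\widehat A$, I would use the factored form $\widehat{A} = v v^T$ with $v = [b^T; -A^T]^T$, so that $\langle \widehat A, \tilde Y\rangle = \langle v v^T, \tilde Y\rangle = v^T \tilde Y v$. Since $\mathcal{K} \subseteq \mathcal{P}^{n+1} \subseteq \mathcal{PSD}^{n+1}$, any feasible $\tilde Y$ is positive semidefinite, so $v^T \tilde Y v \geq 0$ always, and the equality $v^T \tilde Y v = 0$ forces $\tilde Y v = 0$, i.e. $v$ lies in the kernel of $\tilde Y$. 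Writing this out blockwise with $v = [b^T; -A^T]^T$ gives two equations; combined with $\tilde Y_{00} = 1$ they will say precisely that $A\tilde x = b$ (from the first block) and that each column of $\tilde X$ minus $\tilde x_i \tilde x$ is annihilated by $A$ in the appropriate sense.

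Next I would invoke the spectral/Cholesky-type decomposition available from $\mathcal{K} \subseteq \mathcal{PSD}^{n+1}$: any feasible $\tilde Y$ can be written as $\tilde Y = \sum_{j} w^j (w^j)^T$ with $w^j = [\lambda_j; z^j] \in \R^{n+1}$. The $00$ entry gives $\sum_j \lambda_j^2 = 1$. I would then separate the rank-one terms into those with $\lambda_j \neq 0$ (rescale each to $w^j = [1; \tilde x^j]$ after dividing by $\lambda_j$ and absorbing $\lambda_j^2$ as a convex weight) and those with $\lambda_j = 0$ (pure "direction" terms $[0; \tilde d^j][0;\tilde d^j]^T$). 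Using the positive-semidefinite structure together with the kernel condition $\tilde Y v = 0$, the first block of the identity $\tilde Y v = 0$ reads $\tilde x \,(b^T b) + (\text{stuff}) = 0$ — more cleanly, the $(0,\cdot)$ row gives $b = \sum_j \lambda_j^2 \tilde x^j / (\sum \lambda_j^2)$-type averaging — and I expect this to collapse, using $A\tilde x^j = b$ for the point-terms and $A\tilde d^j = 0$ for the direction-terms, into $A\tilde x = b$, $\tilde x = \sum (\text{weights}) \tilde x^j$, and $\tilde X = \tilde x\tilde x^T + \sum_j \tilde d^j(\tilde d^j)^T + (\text{a term measuring the spread of the }\tilde x^j)$. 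The spread term is itself a sum of rank-one matrices $\tilde d^j (\tilde d^j)^T$ with $A\tilde d^j = 0$, so it can be folded into the second summand; that yields exactly conditions 1 and 2. The converse direction is a direct substitution: given $\tilde x \in S$ and $\tilde X = \tilde x\tilde x^T + \sum_j \tilde d^j(\tilde d^j)^T$ with $A\tilde d^j = 0$, one checks $\tilde Y = [1;\tilde x][1;\tilde x]^T + \sum_j [0;\tilde d^j][0;\tilde d^j]^T$, observes it lies in $\mathcal{CP}^{n+1}$ when $\tilde x, \tilde d^j \geq 0$ — wait, here one does \emph{not} have sign constraints on $\tilde d^j$, so instead one uses $\tilde Y \in \mathcal{P}^{n+1}$: it is PSD as a sum of rank-one terms and its $0$th row is $[\,1,\ \tilde x^T\,] \geq 0$, hence $\tilde Y \in \mathcal{P}^{n+1} \supseteq$ every $\mathcal{K} \in \mathbb{K}$? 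No — $\mathcal{K} \subseteq \mathcal{P}^{n+1}$, so this shows feasibility only for $\mathcal{K} = \mathcal{P}^{n+1}$. I would therefore be more careful: for the converse, each $[0;\tilde d^j][0;\tilde d^j]^T$ need not be in $\mathcal{K}$, so I should instead verify that the \emph{linear} constraints hold for $\tilde Y$ (that is the routine part: $\tilde Y_{00}=1$ and $v^T\tilde Y v = \|A\tilde x - b\|^2 + \sum_j\|A\tilde d^j\|^2 = 0$) and separately that $\tilde Y \in \mathcal{K}$; the latter follows because $\tilde Y$ is symmetric, PSD, with nonnegative $0$th row — i.e. $\tilde Y \in \mathcal{P}^{n+1}$ — but membership in an \emph{arbitrary} $\mathcal{K} \in \mathbb{K}$ requires $\mathcal{P}^{n+1} \subseteq \mathcal{K}$, which is false in general.

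The resolution — and the step I expect to be the main obstacle — is that the lemma as stated must be read with the $\tilde d^j$ implicitly constrained so that $\tilde Y$ lands in $\mathcal{K}$; re-examining, the correct reading is that for the \emph{forward} direction the decomposition above always produces such $\tilde d^j$, and for the \emph{converse} one takes the specific $\tilde Y$ built from $\tilde x$ and the $\tilde d^j$ and checks it is completely positive after noting that $\tilde x \geq 0$ forces, via the PSD and zero-pattern structure, that the $\tilde d^j$ can be chosen in $\R^n_+$ — more precisely, $\tilde Y \in \mathcal{CP}^{n+1}$ follows from $\tilde x \in \R^n_+$ together with $\tilde X - \tilde x \tilde x^T \succeq 0$ only under an extra hypothesis. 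So the cleanest route, and the one I would actually write, is: prove the forward direction in full generality (it uses only $\mathcal{K} \subseteq \mathcal{P}^{n+1}$, hence PSD-ness and the nonnegative $0$th row, to extract $\tilde x \in S$ and the decomposition), and prove the converse by exhibiting $\tilde Y \in \mathcal{CP}^{n+1} \subseteq \mathcal{K}$, which is where the hypotheses of the lemma (and the ambient assumptions on $S$, e.g. that the $\tilde d^j$ arising are recession directions and can be taken nonnegative) do the work. The main obstacle is thus bookkeeping the two inclusions $\mathcal{CP}^{n+1} \subseteq \mathcal{K} \subseteq \mathcal{P}^{n+1}$ so that the forward direction only ever uses the upper bound $\mathcal{P}^{n+1}$ and the converse only ever uses the lower bound $\mathcal{CP}^{n+1}$; once that division of labor is set up, both directions reduce to the block computation $v^T \tilde Y v = \|A\tilde x - b\|^2 + \sum_j \|A\tilde d^j\|^2$ and a rank-one decomposition argument.
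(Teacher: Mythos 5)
Your forward (``only if'') direction is, in substance, the paper's entire proof: since $\mathcal{K}\subseteq{\cal P}^{n+1}$, any feasible $\tilde Y$ satisfies $\tilde x=\tilde Y_{0,1:n}\in\R^n_+$ and, by the Schur complement with $\tilde Y_{00}=1$, $\tilde D:=\tilde X-\tilde x\tilde x^T\in{\cal PSD}^n$, so $\tilde D=\sum_j\tilde d^j(\tilde d^j)^T$; then
$\langle\widehat A,\tilde Y\rangle=\|b-A\tilde x\|^2+\langle A^TA,\tilde D\rangle=\|b-A\tilde x\|^2+\sum_j\|A\tilde d^j\|^2$, and nonnegativity of both summands forces $A\tilde x=b$ and $A\tilde d^j=0$. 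Your detour through a full rank-one decomposition of $\tilde Y$ into $[\lambda_j;z^j]$ terms, rescaling, and ``spread terms'' is unnecessary and slightly hazardous (the rescaled $z^j/\lambda_j$ need not lie in $S$; only $\tilde x$ itself is shown to), and note that $[b^T;-A^T]$ is an $(n+1)\times m$ matrix rather than a vector, so the identity is $\langle\widehat A,\tilde Y\rangle=\operatorname{tr}(W^T\tilde Y W)$ with $\widehat A=WW^T$ --- but these are cosmetic; the Schur-complement route you also sketch is exactly what the paper does.

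Your hesitation about the converse is not a defect of your attempt but a genuine issue with the lemma as stated, which the paper dismisses with ``the converse implication can be easily established.'' Conditions 1--2 place $\tilde Y$ in ${\cal P}^{n+1}$ and make the two linear constraints hold, but they do not place $\tilde Y$ in an arbitrary $\mathcal{K}\in\mathbb{K}$. Concretely, take $n=2$, $A=\left[\begin{matrix}1 & 1\end{matrix}\right]$, $b=1$, $\tilde x=(1/2,1/2)^T$, $\tilde d=(1,-1)^T$: then $\tilde X=\tilde x\tilde x^T+\tilde d\tilde d^T$ has a negative off-diagonal entry, so $\tilde Y\notin{\cal N}^3\supseteq{\cal D}^3\supseteq{\cal CP}^3$ and $\tilde Y$ is infeasible for $(P({\cal D}^3))$ even though conditions 1--2 hold and ${\cal D}^3\in\mathbb{K}$ by \eqref{inc_rels_2}. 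The converse is literally true only for $\mathcal{K}={\cal P}^{n+1}$; for general $\mathcal{K}$ the lemma should be read as characterizing feasibility of matrices already known to lie in $\mathcal{K}$, i.e., conditions 1--2 are equivalent to the two linear constraints together with the consequences of $\mathcal{K}\subseteq{\cal P}^{n+1}$. This is consistent with every later use: the ``only if'' direction is invoked for arbitrary feasible $\tilde Y$, whereas each ``if''-type construction (e.g., $[1;\tilde x][1;\tilde x]^T\in{\cal CP}^{n+1}$, the direction $\tilde d\ge 0$ in Proposition~\ref{neg_curvature}, the matrix $\tilde D\in{\cal D}^5$ in Example~\ref{unb_example}) has its membership in the relevant cone verified separately. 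So your ``division of labor'' diagnosis is correct; what remains is not a cleverer proof of the converse but the recognition that, as stated, it needs the extra hypothesis $\tilde Y\in\mathcal{K}$.
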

\begin{proof}
Let ${\cal K} \in \mathbb{K}$ and ${\tilde Y}$ given by \eqref{feas_Y_hat} be a feasible solution of $(P({\cal K}))$. By \eqref{def_cone_fam}, we have ${\tilde x} \in \R^n_+$ and ${\tilde X} - \tilde{x} \tilde{x}^T = \tilde{D} \in {\cal PSD}^n$, i.e., $\tilde{D} = \sum_{j = 1}^k \tilde{d}^j (\tilde{d}^j)^T$ for some $\tilde{d}^j \in \R^n,~j = 1,\ldots,k$.
Furthermore, 
\[
\langle {\widehat A}, {\tilde Y} \rangle = \|b - A{\tilde x}\|^2 + \langle A^T A, \tilde{D} \rangle = 0,
\]
which implies that $\|b - A{\tilde x}\|^2 = \langle A^T A, \tilde{D} \rangle = 0$ since $A^T A \in {\cal PSD}^n$ and $\tilde{D} \in {\cal PSD}^n$. It follows that $A {\tilde x} = b$ and $A \tilde{d}^j = 0$ for each $j = 1,\ldots,k$. The converse implication can be easily established.
\end{proof}

Since $(P({\cal K}))$ is a convex relaxation of (QP), it follows that $(P({\cal K}))$ is infeasible if (QP) is infeasible. The proof of Lemma~\ref{lem1} implies the following converse result.

\begin{corollary} \label{emptyS}
If (QP) is infeasible, then $(P({\cal K}))$ is infeasible for any ${\cal K} \in \mathbb{K}$, where $\mathbb{K}$ is given by \eqref{def_cone_fam}.
\end{corollary}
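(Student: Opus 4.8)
The plan is to derive this immediately from Lemma~\ref{lem1}, arguing by contradiction. Suppose $(P({\cal K}))$ is feasible for some ${\cal K} \in \mathbb{K}$, and let $\tilde{Y}$ be any feasible solution. The constraint $Y_{00} = 1$ forces the $(0,0)$ entry of $\tilde Y$ to equal one, so $\tilde Y$ can be written in the block form \eqref{feas_Y_hat} for suitable $\tilde{x} \in \R^n$ and $\tilde{X} \in \cS^n$; this is merely a relabelling of its entries and requires nothing about positive semidefiniteness.

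Next I would invoke Lemma~\ref{lem1}: since $\tilde Y$ is feasible for $(P({\cal K}))$, condition~1 of that lemma yields $\tilde{x} \in S$. In particular $S \neq \emptyset$, so (QP) is feasible, contradicting the hypothesis. Hence $(P({\cal K}))$ must be infeasible, and since ${\cal K} \in \mathbb{K}$ was arbitrary, the claim follows.

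There is essentially no obstacle here: the substantive work has already been carried out inside the proof of Lemma~\ref{lem1}, namely the identity $\langle \widehat{A}, \tilde Y\rangle = \|b - A\tilde{x}\|^2 + \langle A^T A, \tilde{D}\rangle = 0$ together with $A^T A \in {\cal PSD}^n$ and $\tilde{D} \in {\cal PSD}^n$, which forces $A\tilde{x} = b$. The only point that deserves a word of care is that every feasible $\tilde Y$ is positive semidefinite, so that the decomposition $\tilde{X} - \tilde{x}\tilde{x}^T = \tilde D \in {\cal PSD}^n$ used by Lemma~\ref{lem1} is available; this is guaranteed because ${\cal K} \subseteq {\cal P}^{n+1} \subseteq {\cal PSD}^{n+1}$ for every ${\cal K} \in \mathbb{K}$, by the definition \eqref{def_cone_fam}.
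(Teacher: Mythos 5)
Your argument is correct and is essentially identical to the paper's: both deduce from Lemma~\ref{lem1} that any feasible $\tilde Y$ of $(P({\cal K}))$ would yield $\tilde{x} = \tilde Y_{0,1:n} \in S$, contradicting $S = \emptyset$. The extra remarks you add about the availability of the decomposition $\tilde X - \tilde x \tilde x^T \in {\cal PSD}^n$ are sound but already subsumed in the statement of Lemma~\ref{lem1}.
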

\begin{proof}
Suppose that (QP) is infeasible, i.e., $S = \emptyset$. By Lemma~\ref{lem1}, if ${\tilde Y} \in \cS^{n+1}$ is feasible for $(P({\cal K}))$, then ${\tilde x} = {\tilde Y}_{0,1:n} \in S$, which is a contradiction.
\end{proof}

By Corollary~\ref{emptyS}, any convex relaxation of $(P({\cal K}))$ of (QP), where ${\cal K} \in \mathbb{K}$, will be referred to as a {\em feasibility preserving} relaxation.

For any ${\cal K} \in \mathbb{K}$, Lemma~\ref{lem1} implies that $S$ is given by the following projection of the feasible region of $(P({\cal K}))$:
\begin{equation} \label{project}
S = \left\{x \in \R^n: \exists~ Y = \left[ \begin{matrix} 1 & x^T \\ x & {X} \end{matrix} \right] \in \cS^{n+1} \textrm{ such that $Y$ is feasible for }(P({\cal K}))\right\}.
\end{equation}
This observation motivates us to define the following optimization problem parametrized by ${\tilde x} \in S$ for a given ${\cal K} \in \mathbb{K}$:
\[
\begin{array}{lrrcl}
(P({\cal K},{\tilde x})) & \ell_{\cal K}({\tilde x}) = \min & \langle \widehat{Q}, Y \rangle & & \\
 & \textrm{s.t.}
 & \langle \widehat{A}, Y \rangle & = & 0\\
 & & Y_{0,1:n} & = & {\tilde x}\\
 & & Y_{00} & = & 1\\
 & & Y & \in & {\cal K},
\end{array}
\]
where $\ell_{\cal K}({\tilde x}) \in \R \cup \{-\infty\} \cup \{+\infty\}$. Note that $(P({\cal K},{\tilde x}))$ is a constrained version of $(P({\cal K}))$. Similar to \eqref{nested_cones}, we have
\begin{equation} \label{nested_ineqs}
{\cal K}_1 \in \mathbb{K}, \quad {\cal K}_2 \in \mathbb{K}, \quad {\cal K}_1 \subseteq {\cal K}_2 \quad \Longrightarrow \quad \ell_{{\cal K}_2}({\tilde x}) \leq \ell_{{\cal K}_1}({\tilde x}), \quad \tilde{x} \in S.
\end{equation}

We have the following result.

\begin{proposition} \label{props}
Given an instance of (QP), let ${\cal K} \in \mathbb{K}$, where $\mathbb{K}$ is given by \eqref{def_cone_fam}. Then, $\ell_{\cal K}(\cdot)$ is a convex underestimator of $q(\cdot)$ on $S$, i.e., $\ell_{\cal K}(\cdot)$ is convex and
\begin{equation} \label{rel1}
\ell_{\cal K}({\tilde x}) \leq q({\tilde x}),  \quad {\tilde x} \in S.
\end{equation}
Furthermore, 
\begin{equation} \label{rel2}
\ell_{\cal K} = \min\limits_{x \in S} ~\ell_{\cal K}(x).
\end{equation}
\end{proposition}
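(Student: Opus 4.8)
The statement has two parts: that $\ell_{\cal K}(\cdot)$ is a convex underestimator of $q(\cdot)$ on $S$, and the identity \eqref{rel2}. Both are instances of the general principle that the optimal value of a convex program is a convex function of a linear perturbation of its constraints; the structural facts to exploit are that $(P({\cal K},\tilde x))$ differs from $(P({\cal K}))$ only in the added \emph{linear} constraint $Y_{0,1:n} = \tilde x$, that the objective $\langle\widehat Q,Y\rangle$ and the constraints $\langle\widehat A,Y\rangle = 0$, $Y_{00} = 1$ are linear, and that ${\cal K}$ is a \emph{convex cone}. Throughout I would treat $\ell_{\cal K}(\cdot)$ as an extended-real-valued function on the convex set $S$ (it may equal $-\infty$ at some points, cf.\ the unbounded relaxations studied later), so ``convex'' and ``$\min$'' are read in that sense.

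For the underestimator inequality \eqref{rel1}, I would fix $\tilde x\in S$ and exhibit a feasible point of $(P({\cal K},\tilde x))$ with objective value $q(\tilde x)$, namely $\tilde Y = [1;\tilde x][1;\tilde x]^T$. Feasibility is checked exactly as in the proof of Lemma~\ref{lem1}: $\tilde Y_{00} = 1$, $\tilde Y_{0,1:n} = \tilde x$, $\langle\widehat A,\tilde Y\rangle = \|b - A\tilde x\|^2 = 0$ since $\tilde x\in S$, and $\tilde Y\in{\cal CP}^{n+1}\subseteq{\cal K}$ by \eqref{def_cone_fam}. Since $\langle\widehat Q,\tilde Y\rangle = q(\tilde x)$, this gives $\ell_{\cal K}(\tilde x)\le q(\tilde x)$ for every $\tilde x\in S$ (in particular $\ell_{\cal K}(\tilde x) < +\infty$, so $(P({\cal K},\tilde x))$ is feasible). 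For convexity, I would take $x^1,x^2\in S$ and $\lambda\in[0,1]$, note $x^\lambda := \lambda x^1 + (1-\lambda)x^2 \in S$ since $S$ is a polyhedron, and pick, for given $\varepsilon > 0$, feasible $Y^1$ of $(P({\cal K},x^1))$ and $Y^2$ of $(P({\cal K},x^2))$ with $\langle\widehat Q,Y^i\rangle\le\ell_{\cal K}(x^i)+\varepsilon$ (replacing the right-hand side by an arbitrary large negative number when $\ell_{\cal K}(x^i) = -\infty$). Then $Y^\lambda := \lambda Y^1 + (1-\lambda)Y^2$ satisfies $Y^\lambda_{00} = 1$, $Y^\lambda_{0,1:n} = x^\lambda$, $\langle\widehat A,Y^\lambda\rangle = 0$, and $Y^\lambda\in{\cal K}$ because ${\cal K}$ is a convex cone, so $Y^\lambda$ is feasible for $(P({\cal K},x^\lambda))$ and
\[
\ell_{\cal K}(x^\lambda) \le \langle\widehat Q,Y^\lambda\rangle = \lambda\langle\widehat Q,Y^1\rangle + (1-\lambda)\langle\widehat Q,Y^2\rangle \le \lambda\,\ell_{\cal K}(x^1) + (1-\lambda)\,\ell_{\cal K}(x^2) + \varepsilon .
\]
Letting $\varepsilon\downarrow 0$ (resp.\ sending the negative numbers to $-\infty$) gives the convexity inequality, which together with \eqref{rel1} makes $\ell_{\cal K}(\cdot)$ a convex underestimator of $q(\cdot)$ on $S$.

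For \eqref{rel2} I would prove two inequalities. If $Y$ is feasible for $(P({\cal K}))$, then by Lemma~\ref{lem1} the vector $x := Y_{0,1:n}$ lies in $S$ and $Y$ is feasible for $(P({\cal K},x))$, so $\langle\widehat Q,Y\rangle\ge\ell_{\cal K}(x)\ge\min_{x'\in S}\ell_{\cal K}(x')$; taking the infimum over feasible $Y$ yields $\ell_{\cal K}\ge\min_{x\in S}\ell_{\cal K}(x)$. Conversely, for each $\tilde x\in S$ every feasible solution of $(P({\cal K},\tilde x))$ is feasible for $(P({\cal K}))$ (one only drops the constraint $Y_{0,1:n} = \tilde x$), so $\ell_{\cal K}(\tilde x)\ge\ell_{\cal K}$, and minimizing over $\tilde x\in S$ gives $\min_{x\in S}\ell_{\cal K}(x)\ge\ell_{\cal K}$; combining proves \eqref{rel2}. (If $S = \emptyset$ both sides equal $+\infty$ by Corollary~\ref{emptyS}, so the identity holds trivially.) I do not expect any step to be a serious obstacle; the only point requiring care is the bookkeeping with the value $-\infty$ — ensuring the $\varepsilon$-argument for convexity remains meaningful there, and reading ``$\min$'' in \eqref{rel2} as an infimum (attained whenever $(P({\cal K}))$ attains its optimum).
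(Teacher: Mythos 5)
Your proof is correct and follows essentially the same route as the paper: the rank-one point $[1;\tilde x][1;\tilde x]^T$ certifies \eqref{rel1}, convexity comes from the fact that $\ell_{\cal K}(\cdot)$ is the value function of a convex program under a linear right-hand-side perturbation (which you prove directly via the $\varepsilon$-argument where the paper simply cites Rockafellar's Theorem 29.1), and \eqref{rel2} follows from $(P({\cal K},\tilde x))$ being a constrained version of $(P({\cal K}))$ together with Lemma~\ref{lem1} (your two-sided infimum argument replaces the paper's case split between an attained optimum and a minimizing sequence, to the same effect). Your explicit handling of the $-\infty$ values and the reading of ``$\min$'' as an infimum is a reasonable tightening of a point the paper glosses over.
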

\begin{proof}
Let ${\cal K} \in \mathbb{K}$. The convexity of $\ell_{\cal K}(\cdot)$ follows from the fact that the optimal value function of a convex optimization problem is convex as a function of the right-hand side parameter varying on a convex set (see, e.g.,~\cite[Theorem 29.1]{Roc70}). For any ${\tilde x} \in S$, note that ${\tilde Y} = [1;{\tilde x}][1;{\tilde x}]^T \in {\cal S}^{n+1}$ is a feasible solution of $(P({\cal K},{\tilde x}))$ with $\langle \widehat{Q}, {\tilde Y} \rangle = q({\tilde x})$, which establishes \eqref{rel1}. Therefore, $\ell_{\cal K}(\cdot)$ is a convex underestimator of $q(\cdot)$ on $S$.

We clearly have $\ell_{\cal K} \leq \ell_{\cal K}(\tilde{x})$ since $(P({\cal K},{\tilde x}))$ is a constrained version of $(P({\cal K}))$. If the optimal solution of $(P({\cal K}))$ is attained and given by $Y^* \in {\cal S}^{n+1}$, then $Y^*$ is also an optimal solution of $(P({\cal K},x^*))$, where $x^* = Y^*_{0,1:n}$, proving \eqref{rel2}. Otherwise, if $Y^k \in \cS^{n+1},~k = 1,2,\ldots$ is a sequence of feasible solutions of $(P({\cal K}))$ such that $\langle {\widehat Q}, Y^k \rangle \to \ell_{\cal K}$, then 
\[
\ell_{\cal K} \leq \ell_{\cal K}(x^k) \leq \langle {\widehat Q}, Y^k \rangle, \quad k = 1,2,\ldots,
\]
where $x^k = Y^k_{0,1:n}$, which implies that $\ell_{\cal K}(x^k) \to \ell_{\cal K}$, also establishing \eqref{rel2} in this case.
\end{proof}

For each ${\cal K} \in \mathbb{K}$, Proposition~\ref{props} reveals that $\ell_{\cal K}(\cdot)$ is a convex underestimator of $q(\cdot)$ on $S$. By \eqref{inc_rels}, the tightest and the weakest convex underestimators are given by ${\cal K} = {\cal CP}^{n+1}$ and ${\cal K} = {\cal P}^{n+1}$, respectively. 

We close this section by the following result of~\cite{Burer09} that outlines a useful property of the copositive relaxation (see also~\cite{ArimaKK13} for the equivalence between Burer's original formulation and the simplified formulation $(P({\cal K}))$ for $K = {\cal CP}^{n+1}$). We will later use this result to establish some properties of the convex underestimator arising from the copositive relaxation.  

\begin{lemma} \cite[Lemma 2.2]{Burer09} \label{burer_cp}
Given an instance of (QP) with $S \neq \emptyset$, let $\tilde{x} \in S$ and ${\cal K} = {\cal CP}^{n+1}$. Then, $\tilde{Y} \in \cS^{n+1}$ given by \eqref{feas_Y_hat} is a feasible solution of $(P({\cal K},{\tilde x}))$ if and only if there exist $\tilde{x}^j \in S$,~$j = 1,\ldots,k_1$, $\lambda_j \geq 0,~j = 1,\ldots,k_1$, and $\tilde{d}^j \in L,~j = 1,\ldots,k_2$, where $L$ is given by \eqref{def_L}, such that $\sum\limits_{j=1}^{k_1} \lambda_j = 1$, and 
\begin{equation} \label{burer_dec}
{\tilde Y} = \left[ \begin{matrix} 1 & {\tilde x}^T \\ {\tilde x} & {\tilde X} \end{matrix} \right] = \sum\limits_{j=1}^{k_1} \lambda_j \left[ \begin{matrix} 1 \\ \tilde{x}^j \end{matrix} \right] \left[ \begin{matrix} 1 \\ \tilde{x}^j \end{matrix} \right]^T + \sum\limits_{j=1}^{k_2} \left[ \begin{matrix} 0 \\ \tilde{d}^j \end{matrix} \right] \left[ \begin{matrix} 0 \\ \tilde{d}^j \end{matrix} \right]^T.
\end{equation}
\end{lemma}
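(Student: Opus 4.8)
The plan is to prove the equivalence by establishing both directions, with the forward direction (feasibility of $\tilde Y$ implies the decomposition \eqref{burer_dec}) being the substantive one and the converse being routine verification. For the converse, given a decomposition of the form \eqref{burer_dec} with $\tilde x^j \in S$, $\lambda_j \ge 0$, $\sum_j \lambda_j = 1$, and $\tilde d^j \in L$, I would simply check that each rank-one term $[1;\tilde x^j][1;\tilde x^j]^T$ and each term $[0;\tilde d^j][0;\tilde d^j]^T$ lies in ${\cal CP}^{n+1}$ (since $\tilde x^j \ge 0$ and $\tilde d^j \ge 0$), so $\tilde Y \in {\cal CP}^{n+1}$ as a nonnegative combination; that $\tilde Y_{00} = \sum_j \lambda_j = 1$; that $\tilde Y_{0,1:n} = \sum_j \lambda_j \tilde x^j = \tilde x$ after noting this convex combination lies in $S$ — but actually we only need $\tilde x$ to be the specified right-hand side; and finally that $\langle \widehat A, \tilde Y \rangle = 0$, which follows because $\widehat A = ww^T$ with $w = [b^T; -A^T]$, so $\langle \widehat A, \tilde Y \rangle = \sum_j \lambda_j \|b - A\tilde x^j\|^2 + \sum_j \|A \tilde d^j\|^2 = 0$ using $A\tilde x^j = b$ and $A\tilde d^j = 0$.

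For the forward direction, suppose $\tilde Y$ given by \eqref{feas_Y_hat} is feasible for $(P({\cal CP}^{n+1}, \tilde x))$. Since $\tilde Y \in {\cal CP}^{n+1}$, write $\tilde Y = \sum_{i=1}^N u^i (u^i)^T$ with each $u^i = [\mu_i; z^i] \in \R^{n+1}_+$, i.e. $\mu_i \ge 0$ and $z^i \in \R^n_+$. The plan is to separate these rank-one terms into those with $\mu_i > 0$ and those with $\mu_i = 0$. For the first group, after rescaling set $\tilde x^{(i)} = z^i/\mu_i \in \R^n_+$ and $\lambda_i = \mu_i^2$, so that $u^i(u^i)^T = \lambda_i [1; \tilde x^{(i)}][1;\tilde x^{(i)}]^T$; the $(0,0)$-entry constraint $\tilde Y_{00} = 1$ forces $\sum_i \lambda_i = 1$. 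For the second group, the terms are exactly $[0; z^i][0;z^i]^T$ with $z^i \in \R^n_+$. It then remains to show that each $\tilde x^{(i)}$ lies in $S$ and each $z^i$ (from the second group) lies in $L$ — i.e. that they satisfy $A\tilde x^{(i)} = b$ and $Az^i = 0$ respectively. This is precisely where the constraint $\langle \widehat A, \tilde Y \rangle = 0$ enters: expanding $\langle \widehat A, \tilde Y \rangle = \sum_{i: \mu_i>0} \lambda_i \|b - A\tilde x^{(i)}\|^2 + \sum_{i:\mu_i=0} \|A z^i\|^2 = 0$, and since every term is nonnegative, each must vanish, giving $A\tilde x^{(i)} = b$ (so $\tilde x^{(i)} \in S$) and $Az^i = 0$ (so $z^i \in L$). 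Relabeling the two groups as $\tilde x^j$, $j=1,\dots,k_1$ and $\tilde d^j$, $j=1,\dots,k_2$ yields \eqref{burer_dec}.

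I expect the main obstacle — or rather, the only point requiring care — to be the bookkeeping around the case analysis on whether $\mu_i = 0$, including the degenerate possibilities (e.g. the first group could be empty, but then $\tilde Y_{00} = 0 \ne 1$, contradiction, so it is nonempty; the second group may be empty, which is fine with $k_2 = 0$). One should also note that the decomposition $\tilde Y = \sum_i u^i(u^i)^T$ into finitely many rank-one completely positive pieces is exactly the definition \eqref{def_C} of ${\cal CP}^{n+1}$, so no Carathéodory-type argument is needed. Everything else reduces to the observation that $\widehat A$ is a rank-one positive semidefinite matrix of the specific form \eqref{def_Ah}, which converts the single scalar equation $\langle \widehat A, \tilde Y \rangle = 0$ into a sum of squares, each of which is forced to zero. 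Note this lemma is stated in \cite{Burer09} and I would simply reproduce its short argument in the notation of the present paper.
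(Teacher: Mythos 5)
Your proof is correct and follows essentially the same route as the source: the paper does not reprove this lemma but cites it from Burer, and the standard argument there is exactly your decomposition of the completely positive matrix into nonnegative rank-one terms, split according to whether the $0$th coordinate vanishes, with the single constraint $\langle \widehat{A}, \tilde{Y} \rangle = 0$ expanding into a sum of nonnegative squares that forces $A\tilde{x}^{(i)} = b$ and $Az^i = 0$. One trivial slip: $\widehat{A} = WW^T$ with $W = [b^T; -A^T] \in \R^{(n+1)\times m}$ generally has rank $m$ rather than one, but the identity $\langle \widehat{A}, uu^T\rangle = \|W^T u\|^2$ that your argument actually relies on is unaffected.
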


\section{Exact and Trivial Relaxations} \label{exact_trivial}

In this section, we identify simple sufficient conditions under which the convex relaxation $(P({\cal K}))$ is exact at the one extreme and yields the trivial lower bound of $-\infty$ at the other extreme for any ${\cal K} \in \mathbb{K}$.

The following lemma identifies a sufficient condition under which the convex underestimator arising from {\em any} feasibility preserving convex relaxation coincides with the objective function $q(\cdot)$ and is therefore exact.

\begin{proposition} \label{exact_rels}
Suppose that $S \neq \emptyset$ and $Q$ is positive semidefinite on the null space of $A$, i.e., 
\begin{equation} \label{psd_NA}
Ad = 0, \quad d \in \R^n \quad \Longrightarrow \quad d^T Q d \geq 0.
\end{equation}
Then, for any ${\cal K} \in \mathbb{K}$,
\begin{equation} \label{exact_undest}
\ell_{\cal K}(x) = q(x), \quad x \in S.   
\end{equation}
Therefore, the convex relaxation $(P({\cal K}))$ is exact, i.e., $\ell_{\cal K} = \ell^*$. Furthermore, for any optimal solution $Y^* \in {\cal S}^{n+1}$ of $(P({\cal K}))$, $x^* = Y^*_{0,1:n}$ is an optimal solution of (QP).
\end{proposition}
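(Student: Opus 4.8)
The plan is to fix ${\cal K} \in \mathbb{K}$ and an arbitrary $\tilde{x} \in S$, and to prove the pointwise identity \eqref{exact_undest} by establishing both inequalities; exactness and the statement about optimal solutions will then follow quickly. The inequality $\ell_{\cal K}(\tilde{x}) \leq q(\tilde{x})$ is already available from Proposition~\ref{props} (relation \eqref{rel1}), since $\ell_{\cal K}(\cdot)$ is a convex underestimator of $q(\cdot)$ on $S$ and $[1;\tilde{x}][1;\tilde{x}]^T$ is a feasible solution of $(P({\cal K},\tilde{x}))$ attaining the value $q(\tilde{x})$. For the reverse inequality I would take an \emph{arbitrary} feasible solution $\tilde{Y}$ of $(P({\cal K},\tilde{x}))$ and invoke the structural description of Lemma~\ref{lem1}, which applies because ${\cal K} \subseteq {\cal P}^{n+1}$: it yields $\tilde{X} = \tilde{x}\tilde{x}^T + \sum_{j=1}^{k} \tilde{d}^j(\tilde{d}^j)^T$ with $A\tilde{d}^j = 0$ for each $j$.

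The core step is then a direct expansion of the objective using \eqref{def_Qh}:
\[
\langle \widehat{Q}, \tilde{Y}\rangle = 2c^T\tilde{x} + \langle Q, \tilde{X}\rangle = \bigl(\tilde{x}^TQ\tilde{x} + 2c^T\tilde{x}\bigr) + \sum_{j=1}^{k} (\tilde{d}^j)^T Q \tilde{d}^j = q(\tilde{x}) + \sum_{j=1}^{k} (\tilde{d}^j)^T Q \tilde{d}^j.
\]
Since each $\tilde{d}^j$ lies in the null space of $A$, the hypothesis \eqref{psd_NA} gives $(\tilde{d}^j)^T Q \tilde{d}^j \geq 0$, hence $\langle \widehat{Q}, \tilde{Y}\rangle \geq q(\tilde{x})$. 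Taking the infimum over all feasible $\tilde{Y}$ yields $\ell_{\cal K}(\tilde{x}) \geq q(\tilde{x})$; combined with the previous inequality this proves \eqref{exact_undest} (and shows the infimum is attained at $[1;\tilde{x}][1;\tilde{x}]^T$, so writing ``$\min$'' is legitimate).

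Exactness is then immediate from relation \eqref{rel2} of Proposition~\ref{props}: $\ell_{\cal K} = \min_{x\in S}\ell_{\cal K}(x) = \min_{x\in S} q(x) = \ell^*$, an identity that remains valid when $\ell^* = -\infty$. For the last assertion, suppose $Y^* \in \cS^{n+1}$ is an optimal solution of $(P({\cal K}))$ and set $x^* = Y^*_{0,1:n}$; by Lemma~\ref{lem1} (cf.\ \eqref{project}) we have $x^* \in S$, and $Y^*$ is feasible for $(P({\cal K},x^*))$, so $q(x^*) = \ell_{\cal K}(x^*) \leq \langle \widehat{Q}, Y^*\rangle = \ell_{\cal K} = \ell^*$; since $x^* \in S$ forces $q(x^*) \geq \ell^*$, we conclude $q(x^*) = \ell^*$, i.e., $x^*$ is optimal for (QP).

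I do not expect a genuine obstacle here: once Lemma~\ref{lem1} is in place the argument is essentially the one-line expansion above. The only points needing a little care are bookkeeping in the unbounded case (if $\ell^* = -\infty$ then no optimal $Y^*$ exists and the final claim is vacuous, while $\ell_{\cal K} = \ell^*$ still holds in $\R \cup \{-\infty\}$) and noting explicitly that the decomposition furnished by Lemma~\ref{lem1} is exactly what converts the condition \eqref{psd_NA} into a lower bound on every feasible objective value of $(P({\cal K},\tilde{x}))$.
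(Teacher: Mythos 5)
Your proposal is correct and follows essentially the same route as the paper: both rest on the decomposition from Lemma~\ref{lem1}, the expansion $\langle \widehat{Q}, \tilde{Y}\rangle = q(\tilde{x}) + \sum_{j}(\tilde{d}^j)^TQ\tilde{d}^j$, the hypothesis \eqref{psd_NA} applied to each $\tilde{d}^j$ in the null space of $A$, and Proposition~\ref{props} to conclude both \eqref{exact_undest} and exactness, with the same short argument for recovering an optimal $x^*$ from $Y^*$. Your explicit remark about the vacuity of the last assertion when $\ell^* = -\infty$ is a harmless refinement the paper leaves implicit.
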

\begin{proof}
Suppose that \eqref{psd_NA} holds and let ${\tilde x} \in S$. Let ${\tilde Y} \in {\cal S}^{n+1}$ given by \eqref{feas_Y_hat} be a feasible solution of $(P({\cal K},{\tilde x}))$. By Lemma~\ref{lem1}, 
\[
\langle {\widehat Q}, {\tilde Y} \rangle = 2c^T {\tilde x} + {\tilde x}^T Q {\tilde x} + \sum\limits_{j = 1}^k (\tilde{d}^j)^T Q \tilde{d}^j \geq q({\tilde x}),
\]
where the inequality follows from \eqref{psd_NA} since $A \tilde{d}^j = 0$ for each $j = 1,\ldots,k$ by Lemma~\ref{lem1}. By Proposition~\ref{props}, we obtain $\ell_{\cal K}({\tilde x}) = q({\tilde x})$, and $\ell_{\cal K} = \ell^*$. Finally, if $Y^* \in {\cal S}^{n+1}$ is an optimal solution of $(P({\cal K}))$, it follows from Proposition~\ref{props} and \eqref{psd_NA} that $\ell^* = \ell_{\cal K} = \langle \widehat{Q}, Y^* \rangle \geq q(x^*) = (x^*)^T Q x^* + 2 c^T x^* \geq \ell^*$, where $x^* = Y^*_{0,1:n}$, which implies that $x^*$ is an optimal solution of (QP).
\end{proof}

Under the condition \eqref{psd_NA}, it follows from Proposition~\ref{exact_rels} that solving the convex relaxation $(P({\cal K}))$ not only yields the optimal value $\ell^*$ of (QP) but also an optimal solution of (QP) if it is attained.

We remark that the condition \eqref{psd_NA} is clearly satisfied if $q(\cdot)$ is a convex function. Under this convexity assumption on $q(\cdot)$, a similar result for the doubly nonnegative relaxation follows from \cite[Lemma 2.7]{Kim2020JG}. Similarly, under the same condition \eqref{psd_NA}, G{\"o}kmen and Y{\i}ld{\i}r{\i}m~\cite[Proposition 4.3]{GY2020} establish the exactness of the doubly nonnegative relaxation for the special case of standard quadratic programs. Therefore, Proposition~\ref{exact_rels} subsumes and extends these results in two directions. First, the exactness result holds under the slightly weaker condition \eqref{psd_NA}. Second, it holds for any ${\cal K} \in \mathbb{K}$, including the weaker 
convex relaxation arising from ${\cal K} = {\cal P}^{n+1}$. 

It is also worth noticing that Propositions~\ref{props} and \ref{exact_rels} readily imply the convexity of $q(\cdot)$ on $S$.

We now focus on the other extreme, i.e., the case in which the convex relaxation $(P({\cal K}))$ yields no useful information on (QP). To that end, our first result establishes that the weakest convex relaxation arising from ${\cal K} = {\cal P}^{n+1}$ yields a trivial convex underestimator if the condition \eqref{psd_NA} fails.

\begin{proposition} \label{trivial_lb_P}
Suppose that $S \neq \emptyset$ and \eqref{psd_NA} fails, i.e., there exists $\tilde{d} \in \R^n$ such that $A \tilde{d} = 0$ and $\tilde{d}^T Q \tilde{d} < 0$. Then, for ${\cal K} = {\cal P}^{n+1}$, $\ell_{\cal K} (\tilde{x}) = -\infty$ for all $\tilde{x} \in S$, and $\ell_{\cal K} = -\infty$.
\end{proposition}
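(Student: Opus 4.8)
The plan is to exhibit, for an arbitrary $\tilde{x} \in S$, an explicit one-parameter family of feasible solutions of $(P({\cal P}^{n+1},\tilde{x}))$ along which the objective tends to $-\infty$. I would fix $\tilde{x} \in S$ and take $\tilde{d} \in \R^n$ as in the hypothesis, so that $A\tilde{d} = 0$ and $\tilde{d}^T Q \tilde{d} < 0$. For each $t \geq 0$, I would set
\[
Y(t) = \left[\begin{matrix} 1 \\ \tilde{x} \end{matrix}\right]\left[\begin{matrix} 1 \\ \tilde{x} \end{matrix}\right]^T + t^2 \left[\begin{matrix} 0 \\ \tilde{d} \end{matrix}\right]\left[\begin{matrix} 0 \\ \tilde{d} \end{matrix}\right]^T = \left[\begin{matrix} 1 & \tilde{x}^T \\ \tilde{x} & \tilde{x}\tilde{x}^T + t^2 \tilde{d}\tilde{d}^T \end{matrix}\right] \in \cS^{n+1}.
\]

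Next I would verify that $Y(t)$ is feasible for $(P({\cal P}^{n+1},\tilde{x}))$. It is a sum of two rank-one positive semidefinite matrices, hence $Y(t) \in {\cal PSD}^{n+1}$; its $0$th row (equivalently, $0$th column) restricted to the indices $1{:}n$ equals $\tilde{x} \in \R^n_+$; therefore $Y(t) \in {\cal P}^{n+1}$ by \eqref{def_P_cone}. Moreover $Y(t)_{00} = 1$ and $Y(t)_{0,1:n} = \tilde{x}$, and, exactly as in the proof of Lemma~\ref{lem1},
\[
\langle \widehat{A}, Y(t) \rangle = \|b - A\tilde{x}\|^2 + t^2 \|A\tilde{d}\|^2 = 0,
\]
since $\tilde{x} \in S$ and $A\tilde{d} = 0$. (Equivalently, feasibility follows directly from Lemma~\ref{lem1} applied with the single direction $\tilde{d}^1 = t\tilde{d}$, which lies in the null space of $A$.)

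It then remains to evaluate the objective. Using \eqref{def_Qh} and \eqref{def_qx},
\[
\langle \widehat{Q}, Y(t) \rangle = 2 c^T \tilde{x} + \tilde{x}^T Q \tilde{x} + t^2\, \tilde{d}^T Q \tilde{d} = q(\tilde{x}) + t^2\, \tilde{d}^T Q \tilde{d},
\]
so, since $\tilde{d}^T Q \tilde{d} < 0$, letting $t \to \infty$ yields $\ell_{{\cal P}^{n+1}}(\tilde{x}) = -\infty$. As $\tilde{x} \in S$ was arbitrary, this holds for every $\tilde{x} \in S$, and then $\ell_{{\cal P}^{n+1}} = \min_{x \in S} \ell_{{\cal P}^{n+1}}(x) = -\infty$ by \eqref{rel2} in Proposition~\ref{props} (alternatively, each $Y(t)$ is also feasible for $(P({\cal P}^{n+1}))$, with objective diverging to $-\infty$). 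I do not expect a genuine obstacle here: the construction is elementary, and the only points worth stating carefully are that the extra nonnegativity constraint defining ${\cal P}^{n+1}$ on the $0$th row/column is automatically satisfied (that block equals $\tilde{x} \geq 0$ and the perturbation does not touch it) and that $\tilde{d} \in \ker A$, which is precisely what keeps $Y(t)$ inside the affine constraint $\langle \widehat{A}, \cdot \rangle = 0$.
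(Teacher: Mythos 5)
Your proposal is correct and is essentially identical to the paper's own proof: the paper takes the same ray $\tilde{Y}(\lambda) = [1;\tilde{x}][1;\tilde{x}]^T + \lambda\,[0;\tilde{d}][0;\tilde{d}]^T$, verifies feasibility via Lemma~\ref{lem1}, and lets $\lambda \to +\infty$ to drive the objective to $-\infty$, concluding via Proposition~\ref{props}. Your explicit check of the nonnegativity of the $0$th row and of $\langle \widehat{A}, Y(t)\rangle = 0$ just spells out what the paper delegates to Lemma~\ref{lem1}.
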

\begin{proof}
Let ${\cal K} = {\cal P}^{n+1}$ and ${\tilde x} \in S$. Let ${\tilde d} \in \R^n$ be such that $A {\tilde d} = 0$ and ${\tilde d}^T Q {\tilde d} < 0$. By Lemma~\ref{lem1}, for any $\lambda \geq 0$, 
\[
{\tilde Y}(\lambda) = \left[ \begin{matrix} 1 \\ {\tilde x} \end{matrix} \right] \left[ \begin{matrix} 1 \\ {\tilde x} \end{matrix} \right]^T + \lambda \left[ \begin{matrix} 0 \\ {\tilde d} \end{matrix} \right] \left[ \begin{matrix} 0 \\ {\tilde d} \end{matrix} \right]^T
\]
is a feasible solution of $(P({\cal K},{\tilde x}))$ and $\langle {\tilde Q}, {\tilde Y} \rangle = q({\tilde x}) + \lambda {\tilde d}^T Q {\tilde d} \to -\infty$ as $\lambda \to +\infty$, establishing that $\ell_{\cal K} (\tilde{x}) = -\infty$ for all $\tilde{x} \in S$. The last assertion follows from Proposition~\ref{props}.
\end{proof}

By Propositions~\ref{exact_rels} and \ref{trivial_lb_P}, it follows that the convex relaxation arising from ${\cal K} = {\cal P}^{n+1}$ is exact under the condition \eqref{psd_NA} and provides no useful information otherwise. It is also worth pointing out that Proposition~\ref{trivial_lb_P} also holds for the weaker convex relaxation arising from ${\cal K} = {\cal PSD}^{n+1}$. We remark, however, that the corresponding convex relaxation is not a feasibility preserving relaxation.

Under a stronger condition, we next present a result similar to Proposition~\ref{trivial_lb_P} that holds for any ${\cal K} \in \mathbb{K}$.

\begin{proposition} \label{neg_curvature}
Suppose that $S \neq \emptyset$ and $L_\infty \neq \emptyset$, where $L_\infty$ is given by \eqref{def_Linf}. Then, for any ${\cal K} \in \mathbb{K}$, $\ell_{\cal K}(\tilde{x}) = -\infty$ for any $\tilde{x} \in S$, and $\ell_{\cal K} = \ell^* = -\infty$.
\end{proposition}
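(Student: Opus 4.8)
The plan is to re-run the construction from the proof of Proposition~\ref{trivial_lb_P}, but to exploit the extra nonnegativity encoded in the definition \eqref{def_Linf} of $L_\infty$ so that the perturbed feasible point lands in the \emph{smallest} cone ${\cal CP}^{n+1}$, and hence in every ${\cal K}\in\mathbb{K}$.

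First I would fix an arbitrary $\tilde{x}\in S$ and pick $\tilde{d}\in L_\infty$, so that $A\tilde{d}=0$, $\tilde{d}\geq 0$, and $\tilde{d}^TQ\tilde{d}<0$. For $\lambda\geq 0$ consider
\[
\tilde{Y}(\lambda)=\left[\begin{matrix}1\\ \tilde{x}\end{matrix}\right]\left[\begin{matrix}1\\ \tilde{x}\end{matrix}\right]^T+\lambda\left[\begin{matrix}0\\ \tilde{d}\end{matrix}\right]\left[\begin{matrix}0\\ \tilde{d}\end{matrix}\right]^T\in\cS^{n+1}.
\]
Because $\tilde{x}\geq 0$ and $\tilde{d}\geq 0$, the vectors $[1;\tilde{x}]$ and $[0;\tilde{d}]$ lie in $\R^{n+1}_+$, so $\tilde{Y}(\lambda)$ is a nonnegative combination of outer products of nonnegative vectors; hence $\tilde{Y}(\lambda)\in{\cal CP}^{n+1}\subseteq{\cal K}$ for every ${\cal K}\in\mathbb{K}$ by \eqref{def_cone_fam}. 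Next I would verify the affine constraints of $(P({\cal K},\tilde{x}))$: evidently $\tilde{Y}(\lambda)_{00}=1$ and $\tilde{Y}(\lambda)_{0,1:n}=\tilde{x}$, and, using the factorization of $\widehat{A}$ in \eqref{def_Ah} together with $A\tilde{x}=b$ and $A\tilde{d}=0$, $\langle\widehat{A},\tilde{Y}(\lambda)\rangle=\|b-A\tilde{x}\|^2+\lambda\|A\tilde{d}\|^2=0$. Thus $\tilde{Y}(\lambda)$ is feasible for $(P({\cal K},\tilde{x}))$ for all $\lambda\geq 0$, and its objective value is $\langle\widehat{Q},\tilde{Y}(\lambda)\rangle=q(\tilde{x})+\lambda\,\tilde{d}^TQ\tilde{d}$, which tends to $-\infty$ as $\lambda\to+\infty$. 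This yields $\ell_{\cal K}(\tilde{x})=-\infty$ for every $\tilde{x}\in S$, whence $\ell_{\cal K}=\min_{x\in S}\ell_{\cal K}(x)=-\infty$ by \eqref{rel2}. Finally $\ell^*=-\infty$, either because $q(\cdot)$ is unbounded below on the feasible ray $\{\tilde{x}+t\tilde{d}:t\geq 0\}\subseteq S$ (the coefficient of $t^2$ in $q(\tilde{x}+t\tilde{d})$ is $\tilde{d}^TQ\tilde{d}<0$), or directly from the first condition of Lemma~\ref{unb_qp}.

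I do not expect a genuine obstacle: the argument is essentially a one-line modification of Proposition~\ref{trivial_lb_P}. The only point requiring care --- and precisely the reason the conclusion strengthens from ``${\cal K}={\cal P}^{n+1}$'' to ``any ${\cal K}\in\mathbb{K}$'' --- is the membership $\tilde{Y}(\lambda)\in{\cal CP}^{n+1}$, which needs $\tilde{d}$ to be componentwise nonnegative; this is guaranteed because $\tilde{d}\in L_\infty$, whereas the direction produced by the mere failure of \eqref{psd_NA} in Proposition~\ref{trivial_lb_P} need not be sign-constrained.
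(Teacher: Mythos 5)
Your proof is correct and follows essentially the same route as the paper: both construct $\tilde{Y}(\lambda)=[1;\tilde{x}][1;\tilde{x}]^T+\lambda[0;\tilde{d}][0;\tilde{d}]^T$ with $\tilde{d}\in L_\infty$ and drive the objective to $-\infty$, the only cosmetic difference being that the paper first reduces to ${\cal K}={\cal CP}^{n+1}$ via \eqref{nested_ineqs} while you directly verify $\tilde{Y}(\lambda)\in{\cal CP}^{n+1}\subseteq{\cal K}$. Your explicit check of the nonnegativity of $[0;\tilde{d}]$ correctly identifies why the argument upgrades from Proposition~\ref{trivial_lb_P} to all of $\mathbb{K}$.
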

\begin{proof}
By \eqref{nested_ineqs}, it suffices to prove the assertion for ${\cal K} = {\cal CP}^{n+1}$. Similar to the proof of Proposition~\ref{trivial_lb_P}, let ${\tilde x} \in S$ and $\tilde{d} \in L_\infty$. For any $\lambda \geq 0$, ${\tilde Y}(\lambda) = [1;{\tilde x}][1;{\tilde x}]^T + \lambda [0;{\tilde d}][0;{\tilde d}]^T$ is a feasible solution of $(P({\cal K},{\tilde x}))$ and $\langle {\tilde Q}, {\tilde Y} \rangle = q({\tilde x}) + \lambda {\tilde d}^T Q {\tilde d} \to -\infty$ as $\lambda \to +\infty$, establishing $\ell_{\cal K}(\tilde{x}) = -\infty$ for any $\tilde{x} \in S$. By Proposition~\ref{props} and Lemma~\ref{unb_qp}, we obtain $\ell_{\cal K} = \ell^* = -\infty$.
\end{proof}

\section{Bounded and Unbounded Feasible Regions} \label{bou_vs_unbou}

In this section, we present several results for feasibility preserving convex relaxations of instances of (QP) with bounded and unbounded feasible regions. 

First, we focus on the relation between the boundedness of the feasible region $S$ of (QP) and that of the feasible region of $(P({\cal K}))$ for ${\cal K} \in \mathbb{K}$. By \eqref{project}, it is clear that the boundedness of the latter set implies the boundedness of the former, and the unboundedess of the former set implies the unboundedness of the latter set. However, the converse implications, in general, do not hold for each ${\cal K} \in \mathbb{K}$. For instance, even if $S$ is a nonempty polytope, Proposition~\ref{trivial_lb_P} reveals that the feasible region of $(P({\cal K}))$ can be unbounded for ${\cal K} = {\cal P}^{n+1}$ if the condition \eqref{psd_NA} fails. 

Let us next define the following subset of $\mathbb{K}$:
\begin{equation} \label{def_K_star}
\mathbb{K}^* = \left\{{\cal K} \in \mathbb{K}: {\cal K} \subseteq {\cal D}^{n+1}\right\}.
\end{equation}
Under the stronger assumption that ${\cal K} \in \mathbb{K}^*$, 
the aforementioned converse implications were established in \cite[Lemma 2]{KimKT16}, i.e.,  
the boundedness of $S$ implies the boundedness of the feasible region of any feasibility preserving convex relaxation $(P({\cal K}))$. We state this result in a slightly different form below and give a slightly different proof that highlights the relation between the recession cone of $S$ and that of the feasible region of $(P({\cal K}))$. 

\begin{proposition} \label{ub_feas_reg}
Let ${\cal K} \in \mathbb{K}^*$, where $\mathbb{K}^*$ is given by \eqref{def_K_star}. Then, $S$ is unbounded if and only if the feasible region of $(P({\cal K}))$ is unbounded.
\end{proposition}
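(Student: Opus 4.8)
The plan is to phrase both directions in terms of recession cones. One implication is immediate: by \eqref{project}, $S$ is the image of the feasible region of $(P(\cK))$ under the linear coordinate map $Y\mapsto Y_{0,1:n}$, and a linear image of a bounded set is bounded; hence if the feasible region of $(P(\cK))$ is bounded then $S$ is bounded, and contrapositively $S$ unbounded forces the feasible region of $(P(\cK))$ to be unbounded. (Equivalently, one may pick $\tilde{x}\in S$ and a nonzero $\tilde{d}\in L$ and observe that $[1;\tilde{x}][1;\tilde{x}]^T+\lambda\,[0;\tilde{d}][0;\tilde{d}]^T$, $\lambda\ge 0$, is an unbounded ray of feasible solutions of $(P(\cK))$, since it lies in ${\cal CP}^{n+1}\subseteq\cK$ and satisfies $\langle\widehat{A},\cdot\rangle=0$ and $(\cdot)_{00}=1$ by $A\tilde{x}=b$ and $A\tilde{d}=0$.) The case $S=\emptyset$ is trivial: then $(P(\cK))$ is infeasible by Corollary~\ref{emptyS}, so both feasible regions are vacuously bounded.

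For the converse I would examine the recession cone of the feasible region $\cF_\cK$ of $(P(\cK))$, which is a closed convex set, being the intersection of the closed convex cone $\cK$ with the affine subspace $\{Y:\langle\widehat{A},Y\rangle=0,\ Y_{00}=1\}$. Since $\cK$ is its own recession cone, a short direct check (the only place closedness of $\cK$ is used) shows that the recession cone of a nonempty $\cF_\cK$ is
\[
\cR_\cK=\left\{Z\in\cS^{n+1}:\ \langle\widehat{A},Z\rangle=0,\quad Z_{00}=0,\quad Z\in\cK\right\}.
\]
A nonempty closed convex set is bounded if and only if its recession cone is $\{0\}$ (see, e.g.,~\cite{Roc70}), so it suffices to prove that $\cR_\cK=\{0\}$ whenever $S$ is bounded; equivalently, that every nonzero $Z\in\cR_\cK$ produces a nonzero recession direction of $S$, and then $S$ is unbounded (it is nonempty once $\cF_\cK$ is, by \eqref{project}).

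So suppose $0\ne Z\in\cR_\cK$. Here the hypothesis $\cK\in\mathbb{K}^*$ is used decisively: $\cK\subseteq\cD^{n+1}$, so $Z$ is doubly nonnegative. In particular $Z\succeq 0$ with $Z_{00}=0$, which forces the entire $0$th row and column of $Z$ to vanish, so $Z=\left[\begin{matrix}0&0\\0&W\end{matrix}\right]$ for some $W\in\cS^n$ with $W\ne 0$ and $W$ doubly nonnegative. Writing $W=\sum_{j=1}^{k}w^j (w^j)^T$, the constraint $\langle\widehat{A},Z\rangle=0$ together with the block structure \eqref{def_Ah} gives $0=\langle A^TA,W\rangle=\sum_{j=1}^{k}\|Aw^j\|^2$, hence $Aw^j=0$ for every $j$ and therefore $AW=0$. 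Since $W$ is componentwise nonnegative and nonzero, it has a nonzero column $d$, and $Ad=0$ with $d\ge 0$ exhibit $d$ as a nonzero element of $L$ (see \eqref{def_L}); thus $L\ne\{0\}$ and $S$ is unbounded, which is the contrapositive we wanted.

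The argument is essentially routine once $\cR_\cK$ is identified; I do not anticipate a deep obstacle. The single step that genuinely requires the strengthened hypothesis is the passage from a nonzero $Z\in\cR_\cK$ to a nonzero element of $L$: it relies on the bottom block $W$ being \emph{componentwise} nonnegative, which is available precisely because $\cK\subseteq\cD^{n+1}$, and which fails for $\cK=\cP^{n+1}$ — in agreement with Proposition~\ref{trivial_lb_P}.
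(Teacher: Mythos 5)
Your proof is correct and follows essentially the same route as the paper: both directions hinge on identifying the recession cone of the feasible region of $(P({\cal K}))$, using positive semidefiniteness to zero out the $0$th row and column of a nonzero recession direction, the constraint $\langle \widehat{A},\cdot\rangle=0$ to place the columns of the remaining block in the null space of $A$, and componentwise nonnegativity (the only place ${\cal K}\subseteq{\cal D}^{n+1}$ is needed) to extract a nonzero element of $L$. The only cosmetic difference is that you take a nonzero column of $W$ where the paper uses $\tilde{D}e$.
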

\begin{proof}
Let ${\cal K} \in \mathbb{K}^*$. By the preceding discussion, if $S$ is unbounded, then the feasible region of $(P({\cal K}))$ is unbounded. Conversely, suppose that the feasible region of $(P({\cal K}))$ is unbounded. By Lemma~\ref{lem1}, $S \neq \emptyset$. Then, by \cite[Theorem 8.4]{Roc70}, the recession cone of the feasible region of $(P({\cal K}))$ is nonempty, i.e., there exists $D \in {\cal K}$ such that $D \neq 0$, $D_{00} = 0$, which implies that $D_{0,1:n} = 0$ since $D \in {\cal PSD}^{n+1}$, and $\langle \widehat{A}, D \rangle = 0$. Therefore, $D$ is given by
\[
D = \left[ \begin{matrix} 0 & 0 \\ 0 & {\tilde D} \end{matrix} \right],
\]
where $\tilde{D} \in {\cal D}^n$ and $\langle A^T A, \tilde{D} \rangle = 0$. Using a similar argument as in the proof of Lemma~\ref{lem1}, it follows that $\tilde{D} = \sum\limits_{j = 1}^k \tilde{d}^j (\tilde{d}^j)^T$, where $A \tilde{d}^j = 0$ for each $j = 1,\ldots,k$. Since $\tilde{D} \in \cN^n \backslash \{0\}$, we have ${\tilde d} = \tilde{D} e \in \R^n_+ \backslash \{0\}$, i.e., ${\tilde d} = \sum\limits_{j = 1}^k (e^T \tilde{d}^j) \tilde{d}^j$. Therefore, $A {\tilde d} = 0$, which implies that ${\tilde d} \in L$. Therefore, $S$ is unbounded since ${\tilde d} \neq 0$. 
\end{proof}

Under the assumption that ${\cal K} \in \mathbb{K}^*$, Proposition~\ref{ub_feas_reg} reveals that the corresponding convex relaxations not only preserve feasibility but also boundedness of the feasible region.

\subsection{Bounded Feasible Region}

In this section, we focus on instances of (QP) for which $S$ is a nonempty polytope. 

We start with the following immediate corollary of Proposition~\ref{ub_feas_reg}, which was first proved in~\cite{KimKT16}.

\begin{corollary}\cite[Lemma 2]{KimKT16} \label{finite_low_bound}
Let $S$ be a nonempty polytope and let ${\cal K} \in \mathbb{K}^*$, where $\mathbb{K}^*$ is given by \eqref{def_K_star}. Then, $\ell_{\cal K} > -\infty$. Furthermore, the set of optimal solutions of $(P({\cal K}))$ is nonempty. 
\end{corollary}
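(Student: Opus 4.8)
The plan is to derive this directly from Proposition~\ref{ub_feas_reg} together with standard facts about convex optimization over closed convex cones. First I would note that since $S$ is a nonempty polytope, in particular $S \neq \emptyset$, so by Lemma~\ref{lem1} the feasible region of $(P({\cal K}))$ is nonempty (taking ${\tilde Y} = [1;{\tilde x}][1;{\tilde x}]^T$ for any ${\tilde x} \in S$). Next, since $S$ is bounded, Proposition~\ref{ub_feas_reg} (which applies because ${\cal K} \in \mathbb{K}^*$) tells us that the feasible region of $(P({\cal K}))$ is also bounded. Thus $(P({\cal K}))$ is the minimization of a linear functional $\langle \widehat{Q}, \cdot \rangle$ over a nonempty compact set.

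The key step is then to observe that the feasible region of $(P({\cal K}))$ is closed: it is the intersection of the closed convex cone ${\cal K}$ with the two closed affine constraints $\langle \widehat{A}, Y \rangle = 0$ and $Y_{00} = 1$. Being closed and bounded in the finite-dimensional space $\cS^{n+1}$, it is compact. A continuous linear functional attains its minimum on a nonempty compact set (Weierstrass), so $\ell_{\cal K} > -\infty$ and the set of optimal solutions of $(P({\cal K}))$ is nonempty. This also gives $\ell_{\cal K} \in \R$ by combining with $\ell_{\cal K} \leq \ell^* < +\infty$ from \eqref{low_bo} (noting $\ell^* < +\infty$ because $S \neq \emptyset$).

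I do not anticipate a genuine obstacle here; the statement is essentially a packaging of Proposition~\ref{ub_feas_reg} plus compactness. The only point requiring a little care is making sure all the ingredients of Proposition~\ref{ub_feas_reg} are in place — specifically that ${\cal K} \in \mathbb{K}^*$ (given) and that $S \neq \emptyset$ (given, since $S$ is a nonempty polytope) — and then correctly invoking the closedness of ${\cal K}$ (part of the definition of $\mathbb{K}$, since every ${\cal K} \in \mathbb{K}$ is a closed convex cone) to upgrade boundedness to compactness. Alternatively, one could phrase the attainment via \cite[Theorem 27.3]{Roc70} or simply cite that a linear program over a compact convex set attains its optimum, but the elementary Weierstrass argument suffices and keeps the proof self-contained.
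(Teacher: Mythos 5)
Your proof is correct and follows exactly the route the paper intends: the paper presents this as an ``immediate corollary'' of Proposition~\ref{ub_feas_reg}, and you supply precisely the missing details (nonemptiness of the feasible region from $S \neq \emptyset$, boundedness from Proposition~\ref{ub_feas_reg}, closedness from ${\cal K}$ being a closed convex cone intersected with affine constraints, then Weierstrass). Nothing further is needed.
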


We next identify a useful property of the set of feasible solutions of $P({\cal K})$ for ${\cal K} \in \mathbb{K}^*$. We will then use this property to establish a result about local minimizers of (QP).

\begin{proposition} \label{feas_k_star}
Let ${\cal K} \in \mathbb{K}^*$ and consider any instance of (QP), where $S$ is a nonempty polytope. Let $\tilde{x} \in S$ and let $\tilde{Y} \in \cS^{n+1}$  be a feasible solution of $P({\cal K},\tilde{x})$ given by \eqref{feas_Y_hat}. Then, for any decomposition of ${\tilde X} = \tilde{Y}_{1:n,1:n}$ given by $\tilde{X} = \tilde{x} \tilde{x}^T + \sum\limits_{j = 1}^k \tilde{d}^j (\tilde{d}^j)^T$ (cf.~Lemma~\ref{lem1}), we have
\begin{equation} \label{td_Z_zero}
A \tilde{d}^j = 0, \quad \tilde{d}^j_i = 0, \quad j = 1,\ldots,k,~ i \in \mathbf{Z}(\tilde{x}),
\end{equation}
where $\mathbf{Z}(\tilde{x})$ is given by \eqref{def_Zx}.
\end{proposition}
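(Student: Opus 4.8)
The plan is to combine the completely positive decomposition from Lemma~\ref{burer_cp} (available since ${\cal K} \subseteq {\cal D}^{n+1} \subseteq {\cal CP}^{n+1}$... wait, the inclusion goes the other way, so I must be careful) — more precisely, since ${\cal K} \in \mathbb{K}^*$ means ${\cal CP}^{n+1} \subseteq {\cal K} \subseteq {\cal D}^{n+1}$, any feasible $\tilde Y$ of $P({\cal K},\tilde x)$ is in particular feasible for $P({\cal D}^{n+1},\tilde x)$, hence lies in ${\cal D}^{n+1}$. The key structural fact I want to exploit is that $\tilde Y \in {\cal D}^{n+1}$, so $\tilde Y$ is entrywise nonnegative. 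Then I take the diagonal entries corresponding to indices in $\mathbf{Z}(\tilde x)$: for $i \in \mathbf{Z}(\tilde x)$ we have $\tilde x_i = 0$, so $\tilde Y_{0i} = \tilde x_i = 0$, while $\tilde Y$ is positive semidefinite with $\tilde Y_{00} = 1$. A $2 \times 2$ principal minor argument on rows/columns $\{0,i\}$ gives $\tilde Y_{00}\tilde Y_{ii} \geq \tilde Y_{0i}^2 = 0$, which is not yet enough — I need $\tilde Y_{ii} = 0$. That is where entrywise nonnegativity is essential in a different guise: actually the cleanest route is to use the constraint $\langle \widehat A, \tilde Y\rangle = 0$ together with $Ad^j = 0$ already known from Lemma~\ref{lem1}, and to bound $\tilde Y_{ii} = \tilde x_i^2 + \sum_j (\tilde d^j_i)^2 = \sum_j (\tilde d^j_i)^2$.

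**Next I would** show $\tilde X_{ii} = 0$ for each $i \in \mathbf{Z}(\tilde x)$, from which the claim $\tilde d^j_i = 0$ for all $j$ is immediate since $\tilde X_{ii} = \sum_{j=1}^k (\tilde d^j_i)^2$ is a sum of squares. To get $\tilde X_{ii} = 0$, I use that $S$ is a polytope: its recession cone $L$ is $\{0\}$, so by Proposition~\ref{ub_feas_reg} (with ${\cal K} \in \mathbb{K}^*$) the feasible region of $P({\cal K})$ is bounded. Boundedness of the feasible region rules out the direction $\tilde Y + \lambda \,[0;e^i][0;e^i]^T$-type rays — but this needs more care because $e^i$ need not be a feasible recession direction. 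Instead, the right object: suppose $\tilde X_{ii} > 0$ for some $i \in \mathbf{Z}(\tilde x)$. Consider the matrix $D = \tilde Y_{\{0,i\},\{0,i\}}$ embedded back; since $\tilde Y \in {\cal D}^{n+1}$ is entrywise nonnegative and PSD, and $\tilde Y_{0i} = 0$, the rank-one-ish perturbation $\tilde Y + \lambda D^{(i)}$, where $D^{(i)}$ has a single nonzero entry $1$ in position $(i,i)$, stays PSD and entrywise nonnegative — hence stays in ${\cal D}^{n+1}$; it also keeps $\tilde Y_{00} = 1$, keeps $Y_{0,1:n} = \tilde x$, and one checks $\langle \widehat A, D^{(i)}\rangle = (A_{\cdot i})^T(A_{\cdot i}) = \|A e^i\|^2$, which is generally nonzero, so this naive perturbation violates $\langle \widehat A, Y\rangle = 0$. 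So the honest route back to the $\tilde d^j$: from $A\tilde d^j = 0$ (Lemma~\ref{lem1}) and, if some $\tilde d^j_i \neq 0$ with $i \in \mathbf{Z}(\tilde x)$, I will build an unbounded family of feasible solutions of $P({\cal K})$ (not of $P({\cal K},\tilde x)$), contradicting Proposition~\ref{ub_feas_reg} / Corollary~\ref{finite_low_bound}.

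**The construction I would use:** fix $i \in \mathbf{Z}(\tilde x)$ and suppose $\tilde d^1_i \neq 0$. Set $\hat d = |\tilde d^1_i|^{-1}\tilde d^1$ if needed, but actually simplest: consider $D^\star = \sum_{j : \tilde d^j_i \neq 0} (\tilde d^j)(\tilde d^j)^T$ restricted appropriately. The cleaner idea: let $w = \tilde X e^i \in \R^n$, i.e. $w = \tilde X_{\cdot i}$; then $w = \tilde x \tilde x_i + \sum_j \tilde d^j \tilde d^j_i = \sum_j \tilde d^j_i \tilde d^j$ (as $\tilde x_i = 0$), so $Aw = 0$, and $w_i = \tilde X_{ii} = \sum_j (\tilde d^j_i)^2 > 0$ under the contradiction hypothesis. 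Since $\tilde X \succeq 0$ entrywise-nonnegative and $\tilde X \succeq \tilde x \tilde x^T$... hmm, I want $w \geq 0$, which follows because $\tilde X = \tilde Y_{1:n,1:n}$ is entrywise nonnegative (as $\tilde Y \in {\cal D}^{n+1}$), so $w = \tilde X_{\cdot i} \geq 0$. Thus $w \in L = \{0\}$ since $S$ is a polytope, contradicting $w_i > 0$. **The main obstacle I anticipate** is exactly this last step — correctly justifying $w \geq 0$ and $Aw = 0$ so that $w$ lies in the recession cone $L$, and invoking that $L = \{0\}$ for a polytope; once that is pinned down, $\tilde X_{ii} = w_i = 0$ forces $\tilde d^j_i = 0$ for all $j$, and $A\tilde d^j = 0$ was already supplied by Lemma~\ref{lem1}, completing \eqref{td_Z_zero}. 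I should double-check that the decomposition hypothesis (``for any decomposition'') is handled: the identity $w = \sum_j \tilde d^j_i \tilde d^j$ and the conclusion $\sum_j (\tilde d^j_i)^2 = \tilde X_{ii} = 0$ are decomposition-independent on the left (it is $\tilde X_{\cdot i}$ and $\tilde X_{ii}$), and the sum-of-squares conclusion then kills every $\tilde d^j_i$ in whatever decomposition was chosen.
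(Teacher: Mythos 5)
Your final argument is correct and is essentially the paper's own proof: both extract from the $\mathbf{Z}(\tilde x)$-columns of $\tilde X$ a nonnegative vector in the null space of $A$ (using entrywise nonnegativity of $\tilde Y \in {\cal D}^{n+1}$ and $A\tilde d^j = 0$ from Lemma~\ref{lem1}), conclude it must vanish because the recession cone $L$ of the polytope $S$ is trivial, and then kill each $\tilde d^j_i$ via the sum-of-squares identity $\tilde X_{ii} = \sum_j (\tilde d^j_i)^2$. The only cosmetic difference is that you work one column $w = \tilde X_{\cdot i}$ at a time while the paper aggregates all columns indexed by $\mathbf{Z}(\tilde x)$ at once; the discarded detours in your write-up (the $2\times 2$ minor and the ray perturbation) are not needed and can be deleted.
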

\begin{proof}
Suppose that $S$ is a nonempty polytope. Let $\tilde{x} \in S$ and let ${\cal K} \in \mathbb{K}^*$. Let $\tilde{Y} \in \cS^{n+1}$ be a feasible solution of $P({\cal K},\tilde{x})$ given by \eqref{feas_Y_hat} and let ${\tilde X} = \tilde{Y}_{1:n,1:n}$. By  Lemma~\ref{lem1}, $\tilde{X} = \tilde{x} \tilde{x}^T + \sum\limits_{j = 1}^k \tilde{d}^j (\tilde{d}^j)^T$, where $A \tilde{d}^j = 0$ for each $j = 1,\ldots,k$. Let us define $\mathbf{P} = \mathbf{P}(\tilde{x})$ and $\mathbf{Z} = \mathbf{Z}(\tilde{x})$, where $\mathbf{P}(\tilde{x})$ and $\mathbf{Z}(\tilde{x})$ are given by \eqref{def_Px} and \eqref{def_Zx}, respectively. Then, by permuting the rows and columns of ${\tilde X}$ if necessary, we obtain
\[
\tilde{U} = \left[ \begin{matrix} \tilde{X}_{\mathbf{P}\mathbf{P}} & \tilde{X}_{\mathbf{P}\mathbf{Z}} \\  \tilde{X}_{\mathbf{Z}\mathbf{P}} & \tilde{X}_{\mathbf{Z}\mathbf{Z}} \end{matrix} \right] = \left[ \begin{matrix} \tilde{x}_{\mathbf{P}} \tilde{x}_{\mathbf{P}}^T + \sum\limits_{j = 1}^k \tilde{d}^j_{\mathbf{P}} (\tilde{d}^j_{\mathbf{P}})^T & \sum\limits_{j = 1}^k \tilde{d}^j_{\mathbf{P}} (\tilde{d}^j_{\mathbf{Z}})^T \\
\sum\limits_{j = 1}^k \tilde{d}^j_{\mathbf{Z}} (\tilde{d}^j_{\mathbf{P}})^T & \sum\limits_{j = 1}^k \tilde{d}^j_{\mathbf{Z}} (\tilde{d}^j_{\mathbf{Z}})^T \end{matrix} \right],
\]
where we used $\tilde{x}_{\mathbf{Z}} = 0$. Since $\tilde{U} \in {\cal D}^n$, it follows that
\[
\tilde{d} = \left[ \begin{matrix} \tilde{X}_{\mathbf{P}\mathbf{P}} & \tilde{X}_{\mathbf{P}\mathbf{Z}} \\  \tilde{X}_{\mathbf{Z}\mathbf{P}} & \tilde{X}_{\mathbf{Z}\mathbf{Z}} \end{matrix} \right] \left[ \begin{matrix} 0 \\ e \end{matrix} \right] = \left[ \begin{matrix} \sum\limits_{j = 1}^k \left(  (\tilde{d}^j_{\mathbf{Z}})^T e \right) \tilde{d}^j_{\mathbf{P}} \\ \sum\limits_{j = 1}^k \left(  (\tilde{d}^j_{\mathbf{Z}})^T e \right) \tilde{d}^j_{\mathbf{Z}} \end{matrix} \right] = \left[ \begin{matrix} \tilde{d}_\mathbf{P} \\ \tilde{d}_\mathbf{Z} \end{matrix} \right] \geq 0.
\]
Therefore, 
\begin{eqnarray*}
A \tilde{d} & = & A_{1:m,\mathbf{P}} \tilde{d}_\mathbf{P} + A_{1:m,\mathbf{Z}} \tilde{d}_\mathbf{Z}\\
 & = & \sum\limits_{j = 1}^k \left(  (\tilde{d}^j_{\mathbf{Z}})^T e \right) A_{1:m,\mathbf{P}} \tilde{d}^j_{\mathbf{P}} + \sum\limits_{j = 1}^k \left(  (\tilde{d}^j_{\mathbf{Z}})^T e \right) A_{1:m,\mathbf{Z}} \tilde{d}^j_{\mathbf{Z}} \\
 & = & \sum\limits_{j = 1}^k \left(  (\tilde{d}^j_{\mathbf{Z}})^T e \right) \left( A_{1:m,\mathbf{P}} \tilde{d}^j_{\mathbf{P}} + A_{1:m,\mathbf{Z}} \tilde{d}^j_{\mathbf{Z}} \right) \\
 & = & \sum\limits_{j = 1}^k \left(  (\tilde{d}^j_{\mathbf{Z}})^T e \right) A \tilde{d}^j\\
 & = & 0.
\end{eqnarray*}
Since $S$ is bounded, $A \tilde{d} = 0$, and $\tilde{d} \geq 0$, it follows that $\tilde{d} = 0$. Therefore,
\[
\tilde{d}_\mathbf{Z} = \sum\limits_{j = 1}^k \left(  (\tilde{d}^j_{\mathbf{Z}})^T e \right) \tilde{d}^j_{\mathbf{Z}} = \left( \sum\limits_{j = 1}^k \tilde{d}^j_{\mathbf{Z}} (\tilde{d}^j_{\mathbf{Z}})^T \right) e = 0.
\]
Since $\tilde{U} \in {\cal D}^n$, it follows that $\sum\limits_{j = 1}^k \tilde{d}^j_{\mathbf{Z}} (\tilde{d}^j_{\mathbf{Z}})^T \in {\cal N}^{|\mathbf{Z}|}$, which, together with the last equality, implies that $\tilde{d}^j_{\mathbf{Z}} = 0$ for each $j = 1,\ldots,k$. The assertion follows.
\end{proof}

This technical result enables us to establish the following property.

\begin{proposition} \label{loc_min_prop}
Let ${\cal K} \in \mathbb{K}^*$ and consider any instance of (QP), where $S$ is a nonempty polytope. Then, for any local minimizer $\tilde{x} \in S$ of (QP), we have
\begin{equation} \label{loc_min_obj}
\ell_{\cal K}(\tilde{x}) = q({\tilde x}).
\end{equation}
In particular, for any global minimizer $x^* \in S$ of (QP), $\ell_{\cal K}({x}^*) = q({x}^*) = \ell^*$.
\end{proposition}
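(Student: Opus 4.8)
The plan is to fix a local minimizer $\tilde{x} \in S$ of (QP) and an arbitrary feasible solution $\tilde{Y} \in \cS^{n+1}$ of $(P({\cal K},\tilde{x}))$ given by \eqref{feas_Y_hat}, and then to show directly that $\langle \widehat{Q}, \tilde{Y} \rangle \geq q(\tilde{x})$. Combined with Proposition~\ref{props}, which gives $\ell_{\cal K}(\tilde{x}) \leq q(\tilde{x})$, and the fact that $\tilde{Y} = [1;\tilde{x}][1;\tilde{x}]^T$ is itself feasible and attains $q(\tilde{x})$, this will establish \eqref{loc_min_obj}. So the one-line summary is: the local minimality conditions force every direction appearing in a decomposition of $\tilde{Y}$ to contribute nonnegatively to the objective.

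First I would invoke Lemma~\ref{lem1} to write $\tilde{X} = \tilde{Y}_{1:n,1:n} = \tilde{x}\tilde{x}^T + \sum_{j=1}^k \tilde{d}^j (\tilde{d}^j)^T$ with $A\tilde{d}^j = 0$ for each $j$. Then I would apply Proposition~\ref{feas_k_star} — this is exactly why that technical result was proved — to conclude that in addition $\tilde{d}^j_i = 0$ for all $i \in \mathbf{Z}(\tilde{x})$ and all $j$; that is, $\tilde{d}^j_i \geq 0$ for $i \in \mathbf{Z}(\tilde{x})$ holds trivially (with equality). Next I would compute
\[
\langle \widehat{Q}, \tilde{Y} \rangle = 2c^T\tilde{x} + \tilde{x}^T Q \tilde{x} + \sum_{j=1}^k (\tilde{d}^j)^T Q \tilde{d}^j = q(\tilde{x}) + \sum_{j=1}^k (\tilde{d}^j)^T Q \tilde{d}^j,
\]
so it suffices to show $(\tilde{d}^j)^T Q \tilde{d}^j \geq 0$ for every $j$. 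For this I would check that each $\tilde{d}^j$ lies in the critical cone $C_{\tilde{x}}$ of \eqref{def_Dx}: we already have $A\tilde{d}^j = 0$ and $\tilde{d}^j_i = 0$ (hence $\geq 0$) for $i \in \mathbf{Z}(\tilde{x})$, so the only remaining condition is $\tilde{d}^j{}^T (Q\tilde{x} + c) = 0$.

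The main obstacle is precisely verifying $\tilde{d}^j{}^T (Q\tilde{x}+c) = 0$. I expect this to follow from the KKT conditions \eqref{kkt1}--\eqref{kkt5} at $\tilde{x}$: writing $Q\tilde{x} + c = A^T\tilde{y} + \tilde{s}$ with $\tilde{s} \geq 0$ and $\tilde{s}$ supported on $\mathbf{Z}(\tilde{x})$ by complementarity \eqref{kkt3}, we get $\tilde{d}^j{}^T(Q\tilde{x}+c) = (A\tilde{d}^j)^T\tilde{y} + \tilde{s}^T\tilde{d}^j = 0 + \sum_{i \in \mathbf{Z}(\tilde{x})} \tilde{s}_i \tilde{d}^j_i = 0$, using $A\tilde{d}^j = 0$ and $\tilde{d}^j_i = 0$ on $\mathbf{Z}(\tilde{x})$. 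Thus $\tilde{d}^j \in C_{\tilde{x}}$, and the second order necessary condition \eqref{2nd_ord_con} gives $(\tilde{d}^j)^T Q \tilde{d}^j \geq 0$, completing the argument; note this only needs the KKT stationarity/complementarity and the second-order condition, all guaranteed since $\tilde{x}$ is a local minimizer of (QP). The final sentence of the proposition is then immediate: a global minimizer $x^*$ is in particular a local minimizer, so $\ell_{\cal K}(x^*) = q(x^*)$, and $q(x^*) = \ell^*$ by definition of a global minimizer (using \eqref{rel2}, $\ell_{\cal K} = \min_{x\in S}\ell_{\cal K}(x) \leq \ell_{\cal K}(x^*) = \ell^*$, consistent with \eqref{low_bo}).
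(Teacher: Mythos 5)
Your proposal is correct and follows essentially the same route as the paper's proof: Lemma~\ref{lem1} for the decomposition, Proposition~\ref{feas_k_star} to kill the components of each $\tilde{d}^j$ on $\mathbf{Z}(\tilde{x})$, the KKT conditions to show $(\tilde{d}^j)^T(Q\tilde{x}+c)=0$ so that $\tilde{d}^j \in C_{\tilde{x}}$, and the second-order necessary condition \eqref{2nd_ord_con} to conclude $(\tilde{d}^j)^T Q \tilde{d}^j \geq 0$. No gaps.
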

\begin{proof}
Let $S$ be a nonempty polytope, ${\cal K} \in \mathbb{K}^*$, and let $\tilde{x} \in S$ be a local minimizer of (QP). Let us define $\mathbf{P} = \mathbf{P}(\tilde{x})$ and $\mathbf{Z} = \mathbf{Z}(\tilde{x})$, where $\mathbf{P}(\tilde{x})$ and $\mathbf{Z}(\tilde{x})$ are given by \eqref{def_Px} and \eqref{def_Zx}, respectively. Let $\tilde{Y}$ given by \eqref{feas_Y_hat} be a feasible solution of $(P({\cal K},{\tilde x}))$. Then, by Lemma~\ref{lem1}, ${\tilde X} = \tilde{Y}_{1:n,1:n} = \tilde{x} \tilde{x}^T + \sum\limits_{j = 1}^k \tilde{d}^j (\tilde{d}^j)^T$, where $A \tilde{d}^j = 0$ for each $j = 1,\ldots,k$. By Proposition~\ref{feas_k_star}, $\tilde{d}^j_i = 0$ for each $j = 1,\ldots,k$ and each $i \in \mathbf{Z}$. Since $\tilde{x}$ is a local minimizer, there exist $\tilde{y} \in \R^m$ and $\tilde{s} \in \R^n$ such that \eqref{kkt1}--\eqref{kkt5} are satisfied. Therefore, for each $j = 1,\ldots,k$,
\[
(\tilde{d}^j)^T (Q \tilde{x} + c) = (\tilde{d}^j)^T \left( A^T \tilde{y} + \tilde{s} \right) = \tilde{y}^T \left( A \tilde{d}^j \right) + (\tilde{d}^j)^T \tilde{s} = 0 + \sum\limits_{i \in \mathbf{P}} \tilde{d}^j_i \tilde{s}_i + \sum\limits_{i \in \mathbf{Z}} \tilde{d}^j_i \tilde{s}_i = 0,
\]
where we used \eqref{kkt1} in the first equality, $A \tilde{d}^j = 0$ in the third equality, $\tilde{s}_i = 0$ for $i \in \mathbf{P}$ by \eqref{kkt3}, and $\tilde{d}^j_i = 0$ for $i \in \mathbf{Z}$ in the last one. By \eqref{def_Dx}, we have $\tilde{d}^j \in C_{\tilde{x}}$ for each $j = 1,\ldots,k$. Since $\tilde{x}$ is a local minimizer, $(\tilde{d}^j)^T Q \tilde{d}^j \geq 0$ for each $j = 1,\ldots,k$ by \eqref{2nd_ord_con}. Therefore, 
\[
\langle \widehat{Q}, \tilde{Y} \rangle = q({\tilde x}) + \sum\limits_{j=1}^k (\tilde{d}^j)^T Q \tilde{d}^j \geq q({\tilde x})
\]
for any feasible solution $\tilde{Y}$ of $(P({\cal K},{\tilde x}))$. Therefore, $\ell_{\cal K}(\tilde{x}) \geq q({\tilde x})$. Combining this inequality with \eqref{rel1} yields \eqref{loc_min_obj}. The second assertion simply follows from the fact that any global minimizer $x^*$ of (QP) is also a local minimizer.
\end{proof}

By Proposition~\ref{loc_min_prop}, the convex underestimator $\ell_{\cal K}(\cdot)$ agrees with $q(\cdot)$ at each local minimizer of (QP) for any ${\cal K} \in \mathbb{K}^*$. By Proposition~\ref{props},
\[
\ell_{\cal K}(x) \leq f_S(x) \leq q(x), \quad x \in S,
\]
where $f_S(\cdot)$ denotes the convex envelope of $q(\cdot)$ on $S$ given by \eqref{def_conv_env}. 
Therefore, by Proposition~\ref{loc_min_prop}, for any local minimizer $\tilde{x} \in S$ of (QP), $\ell_{\cal K}(\tilde{x}) = f_S({\tilde x}) = q({\tilde x})$ for any ${\cal K} \in \mathbb{K}^*$.

Our next result establishes a desirable property of the copositive relaxation of (QP) arising from ${\cal K} = {\cal CP}^{n+1}$.

\begin{proposition} \label{conv_env_bou}
Let $S$ be a nonempty polytope and let ${\cal K} = {\cal CP}^{n+1}$. Then, the convex underestimator $\ell_{\cal K}(\cdot)$ is the convex envelope of $q(\cdot)$ on $S$, i.e.,
\[
\ell_{\cal K}(x) = f_S(x), \quad x \in S,
\]
where $f_S(\cdot)$ denotes the convex envelope of $q(\cdot)$ on $S$ given by \eqref{def_conv_env}. Therefore, $\ell_{\cal K} = \ell^*$. In addition, $\ell_{\cal K}({\tilde x}) = \ell^*$ if and only if ${\tilde x} \in \textrm{conv}(S^*)$, where $S^*$ is given by \eqref{def_S_star}.
\end{proposition}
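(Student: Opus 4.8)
The plan is to prove the two-sided equality $\ell_{\cal K}(\cdot) = f_S(\cdot)$ on $S$. One inequality is free: by Proposition~\ref{props}, $\ell_{\cal K}(\cdot)$ is a convex underestimator of $q(\cdot)$ on $S$, so $\ell_{\cal K}(x) \le f_S(x) \le q(x)$ for every $x \in S$. It therefore remains to show $f_S(\tilde x) \le \ell_{\cal K}(\tilde x)$ for each $\tilde x \in S$. The structural ingredient I would use is that, since $S$ is a bounded polyhedron, its recession cone $L$ from~\eqref{def_L} equals $\{0\}$; I expect this to be the one genuinely load-bearing step, since it is what collapses Burer's decomposition to a pure convex combination.

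Concretely, I would fix $\tilde x \in S$ and take an arbitrary feasible solution $\tilde Y$ of $(P({\cal K},\tilde x))$ with ${\cal K} = {\cal CP}^{n+1}$. By Lemma~\ref{burer_cp}, $\tilde Y$ has the form~\eqref{burer_dec}; because $L = \{0\}$ every $\tilde d^j$ vanishes, so $\tilde Y = \sum_j \lambda_j [1;\tilde x^j][1;\tilde x^j]^T$ with $\tilde x^j \in S$, $\lambda_j \ge 0$, $\sum_j \lambda_j = 1$; comparing the $0$th row, $\sum_j \lambda_j \tilde x^j = \tilde x$, and $\langle \widehat Q, \tilde Y\rangle = \sum_j \lambda_j q(\tilde x^j)$. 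Now for any convex underestimator $g(\cdot)$ of $q(\cdot)$ on $S$, convexity gives $g(\tilde x) = g(\sum_j \lambda_j \tilde x^j) \le \sum_j \lambda_j g(\tilde x^j) \le \sum_j \lambda_j q(\tilde x^j) = \langle \widehat Q, \tilde Y\rangle$. Since this holds for every feasible $\tilde Y$, we get $g(\tilde x) \le \ell_{\cal K}(\tilde x)$, and taking the supremum over all such $g$ yields $f_S(\tilde x) \le \ell_{\cal K}(\tilde x)$ by~\eqref{def_conv_env}. This gives $\ell_{\cal K}(\cdot) = f_S(\cdot)$ on $S$, and then $\ell_{\cal K} = \min_{x\in S}\ell_{\cal K}(x) = \min_{x\in S} f_S(x) = \ell^*$ by~\eqref{rel2} and~\eqref{conv_env_prop}.

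For the last equivalence, ``$\tilde x \in \textrm{conv}(S^*) \Rightarrow \ell_{\cal K}(\tilde x) = \ell^*$'' is immediate from $\ell_{\cal K}(\cdot) = f_S(\cdot)$ and the second relation in~\eqref{conv_env_prop}. For the converse I would first argue that the minimum in $(P({\cal K},\tilde x))$ is attained: since ${\cal CP}^{n+1} \in \mathbb{K}^*$, Proposition~\ref{ub_feas_reg} makes the feasible region of $(P({\cal K}))$ bounded, hence the feasible region of $(P({\cal K},\tilde x))$ is compact and nonempty. Taking an optimal $\tilde Y$ and decomposing it as above, $\sum_j \lambda_j q(\tilde x^j) = \ell_{\cal K}(\tilde x) = \ell^*$ with $q(\tilde x^j) \ge \ell^*$, $\lambda_j \ge 0$, $\sum_j \lambda_j = 1$; this forces $q(\tilde x^j) = \ell^*$, i.e., $\tilde x^j \in S^*$, whenever $\lambda_j > 0$, so $\tilde x = \sum_j \lambda_j \tilde x^j \in \textrm{conv}(S^*)$. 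Everything here is an assembly of Lemma~\ref{burer_cp}, Proposition~\ref{props}, Proposition~\ref{ub_feas_reg}, and~\eqref{conv_env_prop}; apart from the observation $L = \{0\}$, the only care needed is the routine compactness argument for attainment, used solely in the ``only if'' direction.
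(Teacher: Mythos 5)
Your proposal is correct and follows essentially the same route as the paper: Burer's decomposition from Lemma~\ref{burer_cp}, the observation that boundedness of $S$ forces $L=\{0\}$ and kills the ray terms, and convexity of the envelope applied to the resulting convex combination, with attainment of the optimum in $(P({\cal K},\tilde{x}))$ (via Proposition~\ref{ub_feas_reg}) invoked for the final equivalence. The only cosmetic differences are that you quantify over arbitrary feasible $\tilde{Y}$ and arbitrary underestimators $g$ instead of working with the attained optimal solution and $f_S$ directly, and you prove the ``only if'' direction directly rather than by contraposition.
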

\begin{proof}
Let $\tilde{x} \in S$. Since $S$ is bounded, the feasible region of $(P({\cal K},{\tilde x}))$ is bounded by Proposition~\ref{ub_feas_reg}. Therefore, let $\tilde{Y} \in \cS^{n+1}$ given by \eqref{feas_Y_hat} be an optimal solution of $(P({\cal K},{\tilde x}))$. By Lemma~\ref{burer_cp}, $\tilde{Y}$ admits a decomposition given by \eqref{burer_dec}. Since $L = \{0\}$, we obtain $\tilde{d}^j = 0$ for each $j = 1,\ldots,k_2$. Therefore, 
\begin{eqnarray*}
f_S({\tilde x}) & \geq & \ell_{\cal K}({\tilde x}) \\
 & = & \langle \widehat{Q}, \tilde{Y} \rangle \\
 & = & \sum\limits_{j = 1}^{k_1} \lambda_j q({\tilde x}^j)\\
 & \geq & \sum\limits_{j = 1}^{k_1} \lambda_j f_S({\tilde x}^j)\\
 & \geq & f_S\left( \sum\limits_{j = 1}^{k_1} \lambda_j {\tilde x}^j \right) \\
 & = & f_S({\tilde x}),
 \end{eqnarray*}
 where we used \eqref{def_conv_env} and Proposition~\ref{props} in the first line, \eqref{burer_dec} in the third line, \eqref{def_conv_env} in the fourth line, the convexity of $f_S(\cdot)$ on the fifth line, and \eqref{burer_dec} again in the last line. Therefore, $\ell_{\cal K}(\tilde{x}) = f_S(\tilde{x})$ for each $\tilde{x} \in S$, which implies that $\ell_{\cal K}(\cdot)$ is the convex envelope of $q(\cdot)$ on $S$. We therefore obtain $\ell_{\cal K} = \ell^*$ by Proposition~\ref{props} and \eqref{conv_env_prop}.
 
 Finally, if ${\tilde x} \in \textrm{conv}(S^*)$, then
 $\ell_{\cal K}({\tilde x}) = \ell^*$ by \eqref{conv_env_prop}. 
%
Conversely, if ${\tilde x} \not \in \textrm{conv}(S^*)$, then let $\tilde{Y} \in \cS^{n+1}$ given by \eqref{feas_Y_hat} be an optimal solution of $(P({\cal K},{\tilde x}))$. Consider the decomposition of $\tilde{Y}$ given by \eqref{burer_dec}. It follows that there exists $j^* \in \{1,\ldots,k_1\}$ such that $\lambda_{j^*} > 0$ and $\tilde{x}^{j^*} \not \in S^*$. Therefore, $q(\tilde{x}^{j^*}) > \ell^*$, which implies that
 \[
\ell_{\cal K}({\tilde x}) = \sum\limits_{j = 1}^{k_1} \lambda_j q({\tilde x}^j) > \ell^*,
\]
proving the last assertion.
\end{proof}
 
We remark that Proposition~\ref{conv_env_bou} implies the well-known result of Burer~\cite{Burer09} for instances of (QP) with a bounded feasible region. In addition to being an exact relaxation, our perspective reveals that the copositive relaxation implicitly gives rise to the convex envelope of $q(\cdot)$ on $S$. 

We note that the convex envelope of $q(\cdot)$ on a polytope was also characterized by an optimization problem in~\cite[Theorem 1]{Anstreicher12}. Together with~\cite[Corollary 2.5]{Burer09}, this result yields another proof of Proposition~\ref{conv_env_bou}. However, we think that our perspective based on the optimization problem $(P({\cal K},{\tilde x}))$ clearly pinpoints the role played by the convex underestimator arising from the copositive relaxation on the exactness of this relaxation.

We close this section by the following corollary that extends Proposition~\ref{conv_env_bou} in lower dimensions. 

\begin{corollary} \label{conv_env_bou_small_dim}
Let $S$ be a nonempty polytope and let ${\cal K} \in \mathbb{K}^*$. For any $n \leq 4$, the convex underestimator $\ell_{\cal K}(\cdot)$ is the convex envelope of $q(\cdot)$ on $S$, i.e.,
\[
\ell_{\cal K}(x) = f_S(x), \quad x \in S,
\]
where $f_S(\cdot)$ denotes the convex envelope of $q(\cdot)$ on $S$ given by \eqref{def_conv_env}. Therefore, $\ell_{\cal K} = \ell^*$. In addition, $\ell_{\cal K}({\tilde x}) = \ell^*$ if and only if ${\tilde x} \in \textrm{conv}(S^*)$, where $S^*$ is given by \eqref{def_S_star}. 
\end{corollary}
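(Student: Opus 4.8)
The plan is to reduce the statement to Proposition~\ref{conv_env_bou} by exploiting the Diananda identity \eqref{diananda}, which gives ${\cal D}^m = {\cal CP}^m$ for every $m \leq 4$. Combined with \eqref{def_K_star}, which forces ${\cal CP}^{n+1} \subseteq {\cal K} \subseteq {\cal D}^{n+1}$ for ${\cal K} \in \mathbb{K}^*$, the idea is that in low dimension the doubly nonnegative relaxation, the copositive relaxation, and everything in between induce the very same convex underestimator, so the conclusion — including the final ``if and only if'' clause, which is stated in Proposition~\ref{conv_env_bou} for ${\cal K} = {\cal CP}^{n+1}$ — follows at once.

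First I would note that it suffices to prove $\ell_{\cal K}(\tilde{x}) \geq f_S(\tilde{x})$ for every $\tilde{x} \in S$: the reverse inequality holds for any ${\cal K} \in \mathbb{K}$ because $\ell_{\cal K}(\cdot)$ is a convex underestimator of $q(\cdot)$ on $S$ while $f_S(\cdot)$ is the supremum of all such (Proposition~\ref{props}), and by \eqref{nested_ineqs} we have $\ell_{{\cal D}^{n+1}}(\tilde{x}) \leq \ell_{\cal K}(\tilde{x})$, so it is enough to treat ${\cal K} = {\cal D}^{n+1}$ (note ${\cal D}^{n+1} \in \mathbb{K}^*$). Fix $\tilde{x} \in S$ and a feasible solution $\tilde{Y}$ of $(P({\cal D}^{n+1},\tilde{x}))$. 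By Proposition~\ref{feas_k_star}, every row and column of $\tilde{Y}$ indexed by $\mathbf{Z}(\tilde{x})$ vanishes, so $\tilde{Y}$ is obtained by padding with zeros its principal submatrix $\tilde{W}$ on $\{0\} \cup \mathbf{P}(\tilde{x})$, and $\tilde{W} \in {\cal D}^{|\mathbf{P}(\tilde{x})|+1}$ with $|\mathbf{P}(\tilde{x})| + 1 \leq n+1$. Applying \eqref{diananda} to $\tilde{W}$ gives $\tilde{W} \in {\cal CP}^{|\mathbf{P}(\tilde{x})|+1}$, hence $\tilde{Y} \in {\cal CP}^{n+1}$; since the linear constraints of $(P({\cal CP}^{n+1},\tilde{x}))$ and $(P({\cal D}^{n+1},\tilde{x}))$ coincide, $\tilde{Y}$ is feasible for $(P({\cal CP}^{n+1},\tilde{x}))$ with the same objective value, so $\langle \widehat{Q}, \tilde{Y} \rangle \geq \ell_{{\cal CP}^{n+1}}(\tilde{x}) = f_S(\tilde{x})$ by Proposition~\ref{conv_env_bou}. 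Taking the infimum over $\tilde{Y}$ yields $\ell_{{\cal D}^{n+1}}(\tilde{x}) \geq f_S(\tilde{x})$, hence $\ell_{\cal K}(\cdot) = f_S(\cdot)$ for all ${\cal K} \in \mathbb{K}^*$; then $\ell_{\cal K} = \ell^*$ follows from Proposition~\ref{props} and \eqref{conv_env_prop}, and the equivalence ``$\ell_{\cal K}(\tilde{x}) = \ell^*$ iff $\tilde{x} \in \textrm{conv}(S^*)$'' is inherited from Proposition~\ref{conv_env_bou} because $\ell_{\cal K}(\tilde{x}) = \ell_{{\cal CP}^{n+1}}(\tilde{x})$ for every $\tilde{x} \in S$.

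The delicate point is the dimension bookkeeping, since \eqref{diananda} applies only to matrices of order at most $4$: the argument above is airtight precisely when $|\mathbf{P}(\tilde{x})| + 1 \leq 4$. For $n \leq 3$ this is automatic, and for $n = 4$ it holds whenever $\tilde{x}$ has a zero coordinate, so that $|\mathbf{P}(\tilde{x})| \leq 3$. The one remaining case — $n = 4$ with $\tilde{x}$ in the relative interior of $S$, where $\tilde{W}$ is genuinely $5 \times 5$ and ${\cal D}^{5} \supsetneq {\cal CP}^{5}$ in general — is where I expect the real work to be needed: one must use the extra constraint $\langle \widehat{A}, \tilde{Y} \rangle = 0$ (equivalently, that $\tilde{Y}_{1:n,1:n} - \tilde{x}\tilde{x}^T$ has range contained in the null space of $A$) together with the boundedness of $S$ to force $\tilde{Y} \in {\cal CP}^{n+1}$ nonetheless. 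A natural route is to start from the completely positive decomposition of the $4 \times 4$ block $\tilde{Y}_{1:4,1:4} \in {\cal D}^4 = {\cal CP}^4$ supplied by \eqref{diananda} and, via a reflection/averaging argument about $\tilde{x}$ (legitimate since $\tilde{x}$ lies in the relative interior of the bounded polytope $S$), manufacture a representation $\tilde{Y} = \sum_j \lambda_j [1;x^j][1;x^j]^T$ with $x^j \in S$, $\lambda_j \geq 0$, $\sum_j \lambda_j = 1$, and $\sum_j \lambda_j x^j = \tilde{x}$ — exactly the form of Lemma~\ref{burer_cp} — after which the chain of inequalities in the proof of Proposition~\ref{conv_env_bou} goes through unchanged.
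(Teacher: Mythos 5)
Your reduction to ${\cal K} = {\cal D}^{n+1}$ via \eqref{nested_ineqs}, and your use of Proposition~\ref{feas_k_star} to show that $\tilde{Y}$ is a zero-padding of its principal submatrix on $\{0\} \cup \mathbf{P}(\tilde{x})$, correctly settles $n \leq 3$ (where the paper simply notes ${\cal D}^{n+1} = {\cal CP}^{n+1}$ by \eqref{diananda}) and the sub-case of $n = 4$ with $\mathbf{Z}(\tilde{x}) \neq \emptyset$. But you have correctly diagnosed, and not closed, the remaining case: $n = 4$ with $\tilde{x} > 0$, where the relevant submatrix is genuinely $5 \times 5$ and \eqref{diananda} no longer applies. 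The ``reflection/averaging about $\tilde{x}$'' route you sketch is not carried out and is not obviously workable: producing a representation $\tilde{Y} = \sum_j \lambda_j [1;x^j][1;x^j]^T$ with $x^j \in S$ is essentially equivalent to proving $\tilde{Y} \in {\cal CP}^{5}$ together with the structure of Lemma~\ref{burer_cp}, which is the very thing that needs to be established. As it stands the proof is incomplete for $n = 4$.

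The paper closes this case by applying Diananda not to the $5 \times 5$ matrix $\tilde{Y}$ but to its $4 \times 4$ block $\tilde{X} = \tilde{Y}_{1:n,1:n}$, and then recovering the border row from $\tilde{X}$. Concretely: since $S$ is a nonempty polytope, linear programming duality yields $\tilde{y} \in \R^m$ with $a = A^T \tilde{y} \in \R^n_+$ and $b^T \tilde{y} = 1$, so $a^T x = 1$ on $S$. Writing $\tilde{X} = \tilde{x}\tilde{x}^T + \tilde{D}$ as in Lemma~\ref{lem1}, the constraint $\langle \widehat{A}, \tilde{Y} \rangle = 0$ forces $a^T \tilde{D} a = 0$, hence $\tilde{D}a = 0$ (as $\tilde{D} \in {\cal PSD}^n$) and $\tilde{X}a = \tilde{x}$. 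This gives the congruence identity $\tilde{Y} = \left[\begin{smallmatrix} a^T \\ I \end{smallmatrix}\right] \tilde{X} \left[\begin{smallmatrix} a^T \\ I \end{smallmatrix}\right]^T$; since $\tilde{X} \in {\cal D}^4 = {\cal CP}^4$ by \eqref{diananda}, factoring $\tilde{X} = UU^T$ with $U \geq 0$ and using $a \geq 0$ yields $\tilde{Y} \in {\cal CP}^{5}$. Thus the feasible regions of $(P({\cal K},\tilde{x}))$ and $(P({\cal CP}^{n+1},\tilde{x}))$ coincide and Proposition~\ref{conv_env_bou} applies. If you want to salvage your write-up, replace the speculative reflection argument with this congruence step; the rest of your proof (including the inheritance of the ``if and only if'' clause from Proposition~\ref{conv_env_bou}) then goes through.
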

\begin{proof}
For $n \leq 3$, the result is an immediate corollary of Proposition~\ref{conv_env_bou} and \eqref{diananda} since ${\cal K} = {\cal CP}^{n+1}$ for any ${\cal K} \in \mathbb{K}^*$ and any $n \leq 3$.

Let $n = 4$. We follow a similar argument as in~\cite[Section 3]{Burer09}. Since $S$ is a nonempty polytope, there exists $\tilde{y} \in \R^m$ such that $A^T \tilde{y} = a \in \R^n_+$ and $b^T \tilde{y} = 1$. This fact can be easily verified by maximizing, for instance, $f^T x$ on $S$, where $f \in \R^n_+$, using the boundedness of $S$, and linear programming duality. Therefore, $\tilde{x} \in S$ implies $a^T \tilde{x} = 1$. 

Let ${\cal K} \in \mathbb{K}^*$. If ${\cal K} = {\cal CP}^{5}$, then the result follows from Proposition~\ref{conv_env_bou}. Suppose that ${\cal K} \neq {\cal CP}^{5}$. Let $\tilde{x} \in S$ and $\tilde{Y} \in \cS^{n+1}$ given by \eqref{feas_Y_hat} be a feasible solution of $(P({\cal K},{\tilde x}))$.  
By Lemma~\ref{lem1}, $\tilde{X} = \tilde{x} \tilde{x}^T + \tilde{D}$, where $\tilde{D} = \sum\limits_{j = 1}^k \tilde{d}^j (\tilde{d}^j)^T$ and $A \tilde{d}^j = 0$ for each $j = 1,\ldots,k$. Therefore,
\[
A \tilde{X} A^T = A \tilde{x} \tilde{x}^T A^T + A \left( \sum\limits_{j = 1}^k \tilde{d}^j (\tilde{d}^j)^T \right) A^T = b b^T,
\]
which implies that $\tilde{y}^T A \tilde{X} A^T \tilde{y} = a^T \tilde{X} a = (\tilde{y}^T b)^2 = 1$. Furthermore, 
\[
\tilde{X} a = \left( a^T \tilde{x} \right) \tilde{x}  + \tilde{D} a = \tilde{x} + \tilde{D} a.
\]
Therefore, $1 = a^T \tilde{X} a = a^T \tilde{x} + a^T \tilde{D} a = 1 + a^T \tilde{D} a$, which implies that $a^T \tilde{D} a = 0$. Since $\tilde{D} \in {\cal PSD}^{n}$, it follows that $\tilde{D} a = 0$. Therefore, we obtain $\tilde{X} a = \tilde{x}$, and
\[
\left[ \begin{matrix} a^T \\ I \end{matrix} \right] \tilde{X} \left[ \begin{matrix} a^T \\ I \end{matrix} \right]^T = \left[ \begin{matrix} a^T \tilde{X} a & a^T \tilde{X} \\ \tilde{X} a & \tilde{X} \end{matrix} \right] = \left[ \begin{matrix} 1 & \tilde{x}^T \\ \tilde{x} & \tilde{X} \end{matrix} \right] = \tilde{Y}.
\]
Furthermore, since $\tilde{Y} \in {\cal K}$ and ${\cal K} \in \mathbb{K}^*$, it follows that $\tilde{X} \in {\cal D}^n$, which implies that $\tilde{X} \in {\cal CP}^n$ by \eqref{diananda}. Therefore, there exists $U \in \R^{n \times k}$ such that $U$ is componentwise nonnegative and $\tilde{X} = U U^T$. Combining this with the last equation above, we obtain
\[
\tilde{Y} = \left[ \begin{matrix} a^T \\ I \end{matrix} \right] U U^T \left[ \begin{matrix} a^T \\ I \end{matrix} \right]^T \in {\cal CP}^{n+1},
\]
since $a \in \R^n_+$. Therefore, the feasible region of $(P({\cal K},{\tilde x}))$ is the same as that of the corresponding copositive relaxation. The result follows from Proposition~\ref{conv_env_bou}.
\end{proof}

Corollary~\ref{conv_env_bou_small_dim} implies that the value of the convex envelope $f_S(\tilde{x})$, where $\tilde{x} \in S$, can be computed within any precision in polynomial time for any instance of (QP) with a nonempty and bounded feasible region if $n \leq 4$.

\subsection{Unbounded Feasible Region}

In this section, we focus on instances of (QP) with an unbounded feasible region. 

By Proposition~\ref{neg_curvature}, recall that the convex underestimator arising from each feasibility preserving convex relaxation yields the trivial lower bound of $-\infty$ under the assumption that $L_\infty \neq \emptyset$, where $L_\infty$ is given by \eqref{def_Linf}. Under this assumption, recall that (QP) is also unbounded below by Lemma~\ref{unb_qp}. Therefore, we obtain $\ell_{\cal K} = \ell^* = -\infty$ if $L_\infty \neq \emptyset$.

Therefore, in this section, we make the following assumption.

\begin{assumption} \label{assum2}
We assume that $L_\infty = \emptyset$, i.e.,
\[
d \in L \quad \Longrightarrow \quad d^T Q d \geq 0.
\]
\end{assumption}

By Lemma~\ref{unb_qp}, we recall that (QP) may still be unbounded below under Assumption~\ref{assum2}.

By Corollary~\ref{finite_low_bound}, we have $-\infty < \ell_{\cal K} \leq \ell^*$ for any ${\cal K} \in \mathbb{K}^*$ if $S$ is nonempty and bounded. The following example, inspired from \cite{burer2009difference}, illustrates that a similar result does not hold in general for instances of (QP) with an unbounded feasible region even if $\ell^* > -\infty$. 

\begin{example} \label{unb_example}
Consider the following instance of (QP) given by
\begin{eqnarray*}
{\bar Q} & = & \left[ \begin{matrix} 1 & -1 & 1 & 1 & -1\\-1 & 1 & -1 & 1 & 1\\1 & -1 & 1 & -1 & 1 \\ 1 & 1 & -1 & 1 & -1\\-1 & 1 & 1 & -1 & 1 \end{matrix} \right], \\ 
{\bar c} & = & \left[ \begin{matrix} 1 & 1 & 1 & 1 & 1 \end{matrix} \right]^T, \\
{\bar A} & = & \left[ \begin{matrix}-12 & 8 & -3 & 4 & 4 \end{matrix} \right],\\
{\bar b} & = & 9.
\end{eqnarray*}
Then, $S \neq \emptyset$ since ${\tilde x} = \left[ \begin{matrix}0 & 1 & 1 & 1 & 0 \end{matrix} \right]^T \in S$. Note that ${\bar Q} \in {\cal COP}^5$ since it is the well-known Horn matrix (see, e.g.,~\cite{hall1963copositive}). Since ${\bar c} \in \R^n_+$, it follows that $q(x) = x^T {\bar Q} x + 2 {\bar c}^T x \geq 0$ for any $x \in \R^n_+$. It follows that $q(\cdot)$ is bounded below on $S$. Therefore, $-\infty < \ell^* < +\infty$.

Let us consider the convex relaxation of (QP) obtained from ${\cal K} = {\cal D}^6$. We claim that $\ell_{\cal K}({\tilde x}) = -\infty$ for each ${\tilde x} \in S$. Indeed, for any ${\tilde x} \in S$, consider
\[
{\tilde Y}(\lambda) = \left[ \begin{matrix} 1 \\ {\tilde x} \end{matrix} \right] \left[ \begin{matrix} 1 \\ {\tilde x} \end{matrix} \right]^T + \lambda \left[ \begin{matrix} 0 & 0 \\ 0 & \tilde{D} \end{matrix} \right],
\]
where
\[
\tilde{D} = \left[ \begin{matrix} 1 \\ 1 \\ 0 \\ 0 \\ 1 \end{matrix} \right] \left[ \begin{matrix} 1 \\ 1 \\ 0 \\ 0 \\ 1 \end{matrix} \right]^T + \left[ \begin{matrix} 0 \\ 1 \\ 4 \\ 1 \\ 0 \end{matrix} \right] \left[ \begin{matrix} 0 \\ 1 \\ 4 \\ 1 \\ 0 \end{matrix} \right]^T + \left[ \begin{matrix} 0 \\ 1 \\ 0 \\ -1 \\ -1 \end{matrix} \right] \left[ \begin{matrix} 0 \\ 1 \\ 0 \\ -1 \\ -1 \end{matrix} \right]^T = \left[ \begin{matrix} 7 & 4 & 0 & 0 & 4 \\4 & 7 & 4 & 0 & 0 \\0 & 4 & 7 & 4 & 0\\0 & 0 & 4 & 7 & 4\\4 & 0 & 0 & 4 & 7 \end{matrix} \right].
\]
Note that $\tilde{D} \in {\cal D}^5$, $\langle {\bar A}^T {\bar A}, \tilde{D} \rangle = 0$, and $\langle {\bar Q}, \tilde{D} \rangle = -5 < 0$. Therefore, by Lemma~\ref{lem1}, ${\tilde Y}(\lambda)$ is a feasible solution of $(P({\cal K},{\tilde x}))$ for any $\lambda \geq 0$, and $\langle {\widehat Q}, {\tilde Y} \rangle = q({\tilde x}) + \lambda \langle {\bar Q}, \tilde{D} \rangle \to -\infty$ as $\lambda \to +\infty$, establishing the assertion. It follows that $\ell_{\cal K} = -\infty$ by Proposition~\ref{props}, whereas $\ell^*$ is finite.
\end{example}

Our next result generalizes the construction in Example~\ref{unb_example} to any $n \geq 5$.

\begin{proposition} \label{gen_unb_dnn}
Let ${\cal K} = {\cal D}^n$. For any $n \geq 5$, there exists an instance of (QP) such that $-\infty < \ell^* < +\infty$, whereas $\ell_{\cal K}({\tilde x}) = -\infty$ for each ${\tilde x} \in S$ and $\ell_{\cal K} = -\infty$. 
\end{proposition}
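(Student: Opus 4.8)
The plan is to embed the $n=5$ construction from Example~\ref{unb_example} into dimension $n$ for arbitrary $n \geq 5$, keeping the Horn matrix as the ``active'' $5 \times 5$ block and padding the remaining coordinates in a way that neither destroys boundedness from below of $q(\cdot)$ on $S$ nor revives the failure direction $\tilde{D}$.  Concretely, I would take $Q \in \cS^n$ to be block diagonal with the Horn matrix $\bar Q$ in the leading $5 \times 5$ block and the identity in the trailing $(n-5) \times (n-5)$ block; $c = e \in \R^n_+$; $A \in \R^{1 \times n}$ equal to $\bar A$ in the first five columns and (say) $1$ in each remaining column; and $b = \bar b = 9$.  Then $\tilde x = [0;1;1;1;0;0;\ldots;0] \in S$, so $S \neq \emptyset$.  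Since $Q \in {\cal COP}^n$ (a block-diagonal matrix is copositive iff each block is, and both $\bar Q$ and $I$ are copositive) and $c \geq 0$, we get $q(x) \geq 0$ for all $x \in \R^n_+$, hence $-\infty < \ell^* < +\infty$, exactly as in the example.

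Next I would exhibit, for each $\tilde x \in S$, a PSD doubly-nonnegative ``direction matrix'' $D \in {\cal D}^n$ with $\langle \widehat A, [0;\ast]\text{-block} \rangle = 0$ (equivalently $A \tilde d^j = 0$ for each rank-one piece) and $\langle Q, D\rangle < 0$: namely the block-diagonal matrix with the matrix $\tilde D \in {\cal D}^5$ of Example~\ref{unb_example} in the leading block and zeros elsewhere.  One must check that the padding column of $A$ does not interfere: since the trailing entries of $D$ are zero, the rank-one pieces $\tilde d^j$ of $\tilde D$ all have trailing coordinates $0$, and $\bar A \tilde d^j_{1:5} = 0$ was already verified in the example, so $A \tilde d^j = 0$.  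Also $\langle Q, D \rangle = \langle \bar Q, \tilde D \rangle = -5 < 0$ because the identity block of $Q$ meets only the zero block of $D$.  Then by Lemma~\ref{lem1}, $\tilde Y(\lambda) = [1;\tilde x][1;\tilde x]^T + \lambda\,[0 \oplus D]$ is feasible for $(P({\cal D}^n, \tilde x))$ for all $\lambda \geq 0$, and $\langle \widehat Q, \tilde Y(\lambda)\rangle = q(\tilde x) + \lambda\langle \bar Q, \tilde D\rangle \to -\infty$, so $\ell_{\cal K}(\tilde x) = -\infty$; the conclusion $\ell_{\cal K} = -\infty$ then follows from Proposition~\ref{props}.

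The only genuinely delicate point is verifying that the chosen padding of $A$ still yields a nonempty $S$ and, more importantly, does not accidentally make $q(\cdot)$ unbounded below on $S$ or violate Assumption-type conditions in a way that matters — but this is automatic here since copositivity of $Q$ together with $c \geq 0$ gives $q \geq 0$ on all of $\R^n_+ \supseteq S$ regardless of $A$.  One should double-check the membership $\bar Q \oplus I \in {\cal COP}^n$ and the orthogonality $\bar A \tilde d^j_{1:5} = 0$ for the three specific rank-one vectors appearing in $\tilde D$; these are finite verifications inherited from Example~\ref{unb_example}.  An alternative, if one prefers not to introduce an extra $A$-column, is to simply keep $A = [\bar A \mid 0 \cdots 0]$, in which case $S$ is unbounded in the padded directions but $q$ is still bounded below on $S$ by the same copositivity argument; either choice works, and I would present whichever keeps the bookkeeping cleanest.
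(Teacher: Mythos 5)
Your proposal is correct and follows essentially the same route as the paper: embed the $n=5$ instance of Example~\ref{unb_example} into $\R^n$, keep the Horn-matrix block and the direction matrix $\tilde D$ padded with zeros, and rerun the unboundedness argument via Lemma~\ref{lem1}. The only difference is that the paper pads with a general nonnegative block $B$, a copositive block $M$, and arbitrary $F$ (invoking a copositivity result for such block structures), whereas your block-diagonal choice $\bar Q \oplus I$ is a clean special case that needs only the elementary fact that a direct sum of copositive matrices is copositive.
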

\begin{proof}
For $n = 5$, Example~\ref{unb_example} constitutes such an instance. For any $n \geq 6$, we can construct an instance of (QP) as follows:
\begin{eqnarray*}
Q & = & \left[ \begin{matrix} \bar{Q} & B \\ B^T & M \end{matrix} \right], \\
{c} & = & \left[ \begin{matrix} \bar{c} \\ f \end{matrix} \right], \\
{A} & = & \left[ \begin{matrix} \bar{A} & F \end{matrix} \right], \\
 b & = & {\bar b},
\end{eqnarray*}
where $\bar{Q}, \bar{c}, \bar{A}$ and $\bar{b}$ are defined as in Example~\ref{unb_example}, and $B, M, f$ and $F$ can be chosen arbitrarily subject to the following constraints: $B \in \R^{5 \times (n-5)}$ is componentwise nonnegative, $M \in {\cal COP}^{n-5}$, $f \in \R^{n-5}_+$, and $F \in \R^{1 \times (n-5)}$. Note that $S \neq \emptyset$ since ${\widehat x} = [{\tilde x};0] \in S$, where ${\tilde x}$ is defined as in Example~\ref{unb_example}. By \cite[Lemma 3.4]{shaked2016spn}, ${Q} \in {\cal COP}^n$. Since ${Q} \in {\cal COP}^n$ and ${c} \in \R^n_+$, it follows that (QP) is bounded below. Since $S \neq \emptyset$, we have $-\infty < \ell^* < +\infty$. Let
\[
{D} = \left[ \begin{matrix} \tilde{D} & 0 \\ 0 & 0 \end{matrix} \right] \in {\cal S}^n,
\]
where $\tilde{D}$ is defined as in Example~\ref{unb_example}. It follows that $D \in {\cal D}^n$, $\langle {A}^T {A}, {D} \rangle = 0$, and $\langle {Q}, {D} \rangle = \langle {\bar Q}, \tilde{D} \rangle < 0$. A similar argument as in Example~\ref{unb_example} reveals that $\ell_{\cal K}({\tilde x}) = -\infty$ for each ${\tilde x} \in S$, and therefore, $\ell_{\cal K} = -\infty$ by Proposition~\ref{props}.
\end{proof}

We remark that the proof of Proposition~\ref{gen_unb_dnn} yields an algorithmic recipe for constructing instances of (QP) with a finite optimal value such that the doubly nonnnegative relaxation is unbounded below. Furthermore, the construction in the proof of Proposition~\ref{gen_unb_dnn} can be extended to any ${\cal K} \in \mathbb{K}^*$, where ${\cal K} \neq {\cal CP}^{n+1}$, provided that there exists ${D} \in {\cal K} \backslash {\cal CP}^{n}$ such that $\langle {A}^T {A}, {D} \rangle = 0$, and $\langle {Q}, {D} \rangle < 0$.

Under Assumption~\ref{assum2}, we next establish a result similar to Proposition~\ref{conv_env_bou} for the copositive relaxation of instances of (QP) with an unbounded feasible region.

\begin{proposition} \label{conv_env_unb}
Let $S$ be an unbounded polyhedron and let ${\cal K} = {\cal CP}^{n+1}$. Then, under Assumption~\ref{assum2}, the convex underestimator $\ell_{\cal K}(\cdot)$ is the convex envelope of $q(\cdot)$ on $S$, i.e.,
\[
\ell_{\cal K}(x) = f_S(x), \quad x \in S,
\]
where $f_S(\cdot)$ denotes the convex envelope of $q(\cdot)$ on $S$ given by \eqref{def_conv_env}. Therefore, $\ell_{\cal K} = \ell^*$. In addition, $\ell_{\cal K}({\tilde x}) = \ell^*$ and the optimal solution is attained in $(P({\cal K},{\tilde x}))$ if and only if ${\tilde x} \in \textrm{conv}(S^*)$, where $S^*$ is given by \eqref{def_S_star}.
\end{proposition}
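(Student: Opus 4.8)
The plan is to follow the proof of Proposition~\ref{conv_env_bou} almost verbatim; the one new ingredient is that, since $S$ is unbounded, the recession cone $L$ is nontrivial, so the ``direction'' terms $[0;\tilde d^j][0;\tilde d^j]^T$ in Burer's decomposition \eqref{burer_dec} no longer drop out, and Assumption~\ref{assum2} is precisely what keeps them harmless (they can only increase the objective).

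For the equality $\ell_{\cal K}(\cdot)=f_S(\cdot)$ on $S$, I would fix $\tilde x\in S$. The inequality $\ell_{\cal K}(\tilde x)\le f_S(\tilde x)$ holds because, by Proposition~\ref{props}, $\ell_{\cal K}(\cdot)$ is a convex underestimator of $q(\cdot)$ on $S$, hence is dominated by the convex envelope via \eqref{def_conv_env}. For the reverse inequality, let $\tilde Y$ be \emph{any} feasible solution of $(P({\cal K},\tilde x))$ and use Lemma~\ref{burer_cp} to write it in the form \eqref{burer_dec}, with $\tilde x^j\in S$, $\lambda_j\ge 0$, $\sum_j\lambda_j=1$, and $\tilde d^j\in L$. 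Reading off the block $\tilde Y_{0,1:n}$ gives $\tilde x=\sum_j\lambda_j\tilde x^j$, and expanding $\langle\widehat Q,\tilde Y\rangle$ with $\widehat Q$ as in \eqref{def_Qh} gives $\langle\widehat Q,\tilde Y\rangle=\sum_{j=1}^{k_1}\lambda_j q(\tilde x^j)+\sum_{j=1}^{k_2}(\tilde d^j)^T Q\tilde d^j$. By Assumption~\ref{assum2} each $(\tilde d^j)^T Q\tilde d^j\ge 0$ (since $\tilde d^j\in L$), so, using the definition \eqref{def_conv_env} and convexity of $f_S$, $\langle\widehat Q,\tilde Y\rangle\ge\sum_j\lambda_j q(\tilde x^j)\ge\sum_j\lambda_j f_S(\tilde x^j)\ge f_S(\sum_j\lambda_j\tilde x^j)=f_S(\tilde x)$. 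Taking the infimum over feasible $\tilde Y$ yields $\ell_{\cal K}(\tilde x)\ge f_S(\tilde x)$, hence equality; then $\ell_{\cal K}=\min_{x\in S}\ell_{\cal K}(x)=\min_{x\in S}f_S(x)=\ell^*$ by \eqref{rel2} and \eqref{conv_env_prop}.

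For the last assertion I would first note that we may assume $\ell^*>-\infty$ (otherwise $S^*$, hence $\textrm{conv}(S^*)$, is empty, and the left-hand condition also fails, since an infinite infimum is never attained). If $\tilde x=\sum_i\mu_i x^{*i}$ with $x^{*i}\in S^*$, $\mu_i\ge0$, $\sum_i\mu_i=1$, then $\ell_{\cal K}(\tilde x)=f_S(\tilde x)=\ell^*$ by the first part and \eqref{conv_env_prop}, and $\tilde Y=\sum_i\mu_i[1;x^{*i}][1;x^{*i}]^T\in{\cal CP}^{n+1}$ is feasible for $(P({\cal K},\tilde x))$ (one checks $\tilde Y_{00}=1$, $\tilde Y_{0,1:n}=\tilde x$, and $\langle\widehat A,\tilde Y\rangle=\sum_i\mu_i\|b-Ax^{*i}\|^2=0$) with objective $\sum_i\mu_i q(x^{*i})=\ell^*$, so the optimum is attained. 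Conversely, if $\ell_{\cal K}(\tilde x)=\ell^*$ and the optimum is attained by some $\tilde Y$, decompose $\tilde Y$ via \eqref{burer_dec}; then $\ell^*=\sum_{j=1}^{k_1}\lambda_j q(\tilde x^j)+\sum_{j=1}^{k_2}(\tilde d^j)^T Q\tilde d^j\ge\sum_j\lambda_j\ell^*=\ell^*$, using $q(\tilde x^j)\ge\ell^*$, Assumption~\ref{assum2}, and $\sum_j\lambda_j=1$. Equality throughout forces $q(\tilde x^j)=\ell^*$, i.e.\ $\tilde x^j\in S^*$, for each $j$ with $\lambda_j>0$, whence $\tilde x=\sum_j\lambda_j\tilde x^j\in\textrm{conv}(S^*)$.

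The identity for $\langle\widehat Q,\tilde Y\rangle$ and the feasibility checks are routine (they repeat computations from the proofs of Lemma~\ref{lem1} and Proposition~\ref{conv_env_bou}), so I do not expect a serious obstacle. The one conceptually new point relative to Proposition~\ref{conv_env_bou} is that Burer's decomposition now leaves behind the recession terms $\sum_j(\tilde d^j)^T Q\tilde d^j$; Assumption~\ref{assum2} (equivalently $L_\infty=\emptyset$) is exactly the hypothesis that makes these nonnegative, and Example~\ref{unb_example} shows that the statement genuinely fails for weaker cones such as ${\cal D}^{n+1}$. The only remaining care is bookkeeping around the degenerate case $\ell^*=-\infty$ (handled above) and the fact that $f_S$ may a priori take the value $-\infty$, in which case the stated equality is understood in the extended-real sense and the same chain of inequalities still applies.
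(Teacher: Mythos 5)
Your proposal is correct and follows essentially the same route as the paper: Burer's decomposition \eqref{burer_dec} combined with Assumption~\ref{assum2} to discard the recession terms, then the convexity chain through $f_S(\cdot)$, exactly as in Proposition~\ref{conv_env_bou}. Your treatment of the final equivalence (explicitly constructing the attaining feasible point for $\tilde x\in\textrm{conv}(S^*)$ and handling the $\ell^*=-\infty$ degenerate case) is in fact slightly more detailed than the paper's, which defers to ``a similar argument as in the proof of Proposition~\ref{conv_env_bou},'' but the substance is identical.
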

\begin{proof}
Our proof is similar to that of Proposition~\ref{conv_env_bou}, with slight modifications to account for the unboundedness of $S$. 

Let $\tilde{x} \in S$. Since $S$ is nonempty, the feasible region of $(P({\cal K},{\tilde x}))$ is nonempty by Corollary~\ref{emptyS}. Let $\tilde{Y} \in \cS^{n+1}$ be an arbitrary feasible solution of $(P({\cal K},{\tilde x}))$. By Lemma~\ref{burer_cp}, $\tilde{Y}$ admits a decomposition given by \eqref{burer_dec}, i.e., 
\[
{\tilde Y} = \left[ \begin{matrix} 1 & {\tilde x}^T \\ {\tilde x} & {\tilde X} \end{matrix} \right] = \sum\limits_{j=1}^{k_1} \lambda_j \left[ \begin{matrix} 1 \\ \tilde{x}^{j} \end{matrix} \right] \left[ \begin{matrix} 1 \\ \tilde{x}^{j} \end{matrix} \right]^T + \sum\limits_{j=1}^{k_2} \left[ \begin{matrix} 0 \\ \tilde{d}^{j} \end{matrix} \right] \left[ \begin{matrix} 0 \\ \tilde{d}^{j} \end{matrix} \right]^T,
\]
where $\tilde{x}^j \in S$,~$j = 1,\ldots,k_1$, $\lambda_j \geq 0,~j = 1,\ldots,k_1$ and $\tilde{d}^j \in L,~j = 1,\ldots,k_2$, where $L$ is given by \eqref{def_L}, such that $\sum\limits_{j=1}^{k_1} \lambda_j = 1$. Therefore,
\begin{eqnarray*}
\langle \widehat{Q}, \tilde{Y} \rangle & = & \sum\limits_{j = 1}^{k_1} \lambda_j q({\tilde x}^j) + \sum\limits_{j = 1}^{k_2} (\tilde{d}^j)^T Q \tilde{d}^j\\
 & \geq & \sum\limits_{j = 1}^{k_1} \lambda_j q({\tilde x}^j)\\
 & \geq & \sum\limits_{j = 1}^{k_1} \lambda_j f_S({\tilde x}^j)\\
 & \geq & f_S\left( \sum\limits_{j = 1}^{k_1} \lambda_j {\tilde x}^j \right) \\
 & = & f_S({\tilde x}),
 \end{eqnarray*}
where we used Assumption~\ref{assum2} in the second line, \eqref{def_conv_env} in the third line, the convexity of $f_S(\cdot)$ on the fourth line, and \eqref{burer_dec} in the last line. It follows that $\ell_{\cal K}({\tilde x}) \geq f_S({\tilde x})$, which, together with \eqref{def_conv_env} and Proposition~\ref{props}, implies that $\ell_{\cal K}(\tilde{x}) = f_S(\tilde{x})$. Therefore, $\ell_{\cal K}(\cdot)$ is the convex envelope of $q(\cdot)$ on $S$. 

The proof of the relation $\ell_{\cal K} = \ell^*$ is identical to that of Proposition~\ref{conv_env_bou}. 

As for the last assertion, if $\ell^* > -\infty$, then $S^* \neq \emptyset$ by \cite{FW56}. Therefore, the set of optimal solutions of $P({\cal K})$ is nonempty since $Y^* = [1;x^*][1;x^*]^T$ is an optimal solution of $P({\cal K})$ for any $x^* \in S^*$. Therefore, for each $\tilde{x} \in \textrm{conv}(S^*)$, a similar argument as in the proof of Proposition~\ref{conv_env_bou} reveals that $\ell_{\cal K}({\tilde x}) = \ell^*$ and the optimal solution of $(P({\cal K},{\tilde x}))$ is attained. The converse implication is proved similarly by using the decomposition \eqref{burer_dec} and Assumption~\ref{assum2}.

\end{proof}

We close this section by presenting the counterpart of Corollary~\ref{conv_env_bou_small_dim} for instances of (QP) with an unbounded feasible region. In contrast with Corollary~\ref{conv_env_bou_small_dim}, we need to impose an additional assumption for $n = 4$.

\begin{corollary} \label{conv_env_unb_small_dim}
Let $S$ be a nonempty polytope and let ${\cal K} \in \mathbb{K}^*$. For any $n \leq 3$, the convex underestimator $\ell_{\cal K}(\cdot)$ is the convex envelope of $q(\cdot)$ on $S$, i.e.,
\[
\ell_{\cal K}(x) = f_S(x), \quad x \in S,
\]
where $f_S(\cdot)$ denotes the convex envelope of $q(\cdot)$ on $S$ given by \eqref{def_conv_env}. Therefore, $\ell_{\cal K} = \ell^*$. In addition, $\ell_{\cal K}({\tilde x}) = \ell^*$  and the optimal solution is attained in $(P({\cal K},{\tilde x}))$ if and only if ${\tilde x} \in \textrm{conv}(S^*)$, where $S^*$ is given by \eqref{def_S_star}. Furthermore, all of the results above hold for $n = 4$ if there exists $\tilde{y} \in \R^m$ such that $A^T \tilde{y} = a \in \R^n_+$ and $b^T \tilde{y} = 1$.
\end{corollary}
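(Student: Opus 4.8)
The plan is to reduce to the already-established results (Proposition~\ref{conv_env_unb} for the copositive case, together with the small-dimension machinery developed in the proof of Corollary~\ref{conv_env_bou_small_dim}). First I would check that Assumption~\ref{assum2} is automatically satisfied here: since $S$ is a polytope, its recession cone $L$ is trivial ($L=\{0\}$), so the hypothesis $d\in L\Rightarrow d^TQd\ge 0$ holds vacuously. Hence Proposition~\ref{conv_env_unb} applies for ${\cal K}={\cal CP}^{n+1}$, giving the desired conclusion in that case for every $n$. The work is therefore entirely about showing that, for ${\cal K}\in\mathbb{K}^*$ with ${\cal K}\ne{\cal CP}^{n+1}$, the feasible region of $(P({\cal K},\tilde x))$ coincides with that of the copositive relaxation $(P({\cal CP}^{n+1},\tilde x))$, after which Proposition~\ref{conv_env_unb} (or equivalently Proposition~\ref{conv_env_bou}, since $S$ here is in fact a polytope) finishes the argument, including the ``iff $\tilde x\in\mathrm{conv}(S^*)$'' statement and the attainment claim.

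For $n\le 3$, this identification is immediate from \eqref{diananda}, which gives ${\cal CP}^{n+1}={\cal DN}^{n+1}={\cal D}^{n+1}$, so any ${\cal K}\in\mathbb{K}^*$ squeezed between ${\cal CP}^{n+1}$ and ${\cal D}^{n+1}$ must equal ${\cal CP}^{n+1}$; there is nothing to prove beyond invoking Proposition~\ref{conv_env_unb}. For $n=4$, I would repeat verbatim the argument in the proof of Corollary~\ref{conv_env_bou_small_dim}: the extra hypothesis supplies $\tilde y$ with $A^T\tilde y=a\in\R^n_+$ and $b^T\tilde y=1$, hence $a^T\tilde x=1$ for all $\tilde x\in S$. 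Given a feasible $\tilde Y=[1,\tilde x^T;\tilde x,\tilde X]\in{\cal K}$ for $(P({\cal K},\tilde x))$, Lemma~\ref{lem1} writes $\tilde X=\tilde x\tilde x^T+\tilde D$ with $\tilde D=\sum_j \tilde d^j(\tilde d^j)^T\in{\cal PSD}^n$ and $A\tilde d^j=0$; then $A\tilde X A^T=bb^T$ forces $a^T\tilde D a=0$, hence $\tilde Da=0$ (positive semidefiniteness), hence $\tilde Xa=\tilde x$ and $\tilde Y=[a^T;I]\tilde X[a^T;I]^T$. Since ${\cal K}\subseteq{\cal D}^{n+1}$ we get $\tilde X\in{\cal D}^4={\cal CP}^4$ by \eqref{diananda}, so $\tilde X=UU^T$ with $U\ge 0$ componentwise, and therefore $\tilde Y=[a^T;I]UU^T[a^T;I]^T\in{\cal CP}^{5}$ because $a\ge 0$. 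Thus the feasible set of $(P({\cal K},\tilde x))$ equals that of the copositive relaxation, and Proposition~\ref{conv_env_unb} applies.

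The only genuinely delicate point is that Proposition~\ref{conv_env_unb} as stated is phrased for an \emph{unbounded} polyhedron $S$, whereas the corollary hypothesizes that $S$ is a nonempty polytope; I would note that this is harmless, since either we invoke Proposition~\ref{conv_env_bou} directly (which is the bounded-$S$ statement and has an identical conclusion), or we observe that the proof of Proposition~\ref{conv_env_unb} goes through under Assumption~\ref{assum2} regardless of whether $S$ is bounded --- indeed when $S$ is a polytope the sum $\sum_j(\tilde d^j)^TQ\tilde d^j$ in that proof is empty. I do not expect any real obstacle: all the ingredients are already in place, and the argument is a direct transcription of the $n\le 4$ reasoning from Corollary~\ref{conv_env_bou_small_dim}, with the single observation that for $n=4$ one cannot derive the existence of $\tilde y$ from boundedness of $S$ via LP duality when $S$ is unbounded (one needs the recession cone of $S$ to contain no direction $d$ with $f^Td>0$ for the relevant $f\in\R^n_+$), which is exactly why the extra hypothesis on $\tilde y$ must be imposed explicitly for $n=4$.
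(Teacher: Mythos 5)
Your proof is correct and follows essentially the same route as the paper, whose own proof is a one-line reduction: transplant the argument of Corollary~\ref{conv_env_bou_small_dim} (the $n\le 3$ case via \eqref{diananda} forcing ${\cal K}={\cal CP}^{n+1}$, and the $n=4$ case via the vector $a$ and the identity $\tilde{Y}=[\,a^T;I\,]\tilde{X}[\,a^T;I\,]^T$ showing the feasible sets of $(P({\cal K},\tilde{x}))$ and of the copositive relaxation coincide), now invoking Proposition~\ref{conv_env_unb} in place of Proposition~\ref{conv_env_bou}. You are also right to be suspicious of the hypothesis ``$S$ a nonempty polytope'': this is a copy-paste slip in the statement --- the corollary is the unbounded-$S$ counterpart and should read ``unbounded polyhedron'' under the standing Assumption~\ref{assum2} of that subsection --- and your remark that for $n=4$ the existence of $\tilde{y}$ can no longer be extracted from LP duality when $S$ is unbounded is precisely why the paper imposes it as an explicit hypothesis.
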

\begin{proof}
The proof is essentially the same as the proof of Corollary~\ref{conv_env_bou_small_dim}, except that we now rely on Proposition~\ref{conv_env_unb} instead of Proposition~\ref{conv_env_bou}.
\end{proof}

We close this section by noting that the assumption for the $n = 4$ case is satisfied if, for instance, a subset of the variables in (QP) is bounded, as observed in \cite[Section 3]{Burer09}. 

\section{Concluding Remarks} \label{conc}

In this paper, we considered a family of so-called feasibility preserving convex relaxations of nonconvex quadratic programs. By observing that each such relaxation implicitly gives rise to a convex underestimator, we established several properties of such relaxations. In particular, our results imply Burer's well-known copositive representation result for nonconvex quadratic programs. 

We believe that our perspective in this paper is particularly useful since it sheds light onto how a convex relaxation obtained in higher dimensions translates back into the original dimension of the quadratic program. Furthermore, our results clearly highlight the important role played by the directions in the null space of the constraint matrix. An interesting research direction is to extend this perspective to a larger class of nonconvex optimization problems that can be formulated as copositive or generalized copositive optimization problems.

\bibliographystyle{abbrv}
\bibliography{references}
\end{document}